\journal{Stochastic Processes and their Applications}
\newtheorem{theorem}{Theorem}
\newtheorem{corollary}{Corollary}
\newtheorem{lemma}{Lemma}
\newtheorem{proposition}{Proposition}
\newtheorem{remark}{Remark}
\newcommand{\eps}{\varepsilon}
\renewcommand{\a}{\alpha}
\renewcommand{\phi}{\varphi}
\renewcommand{\epsilon}{\varepsilon}
\newcommand{\E}{\text{E}}
\renewcommand{\P}{\text{Pr}}
\begin{document}

\begin{frontmatter}



\title{A sharp adaptive confidence ball for self-similar functions}


\author[label1]{Richard Nickl}
\author[label2,label3]{Botond Szab\'o\corref{cor1}}
\address[label1]{University of Cambridge}
\address[label2]{University of Amsterdam}
\address[label3]{Budapest University of Technology and Economics}

\cortext[cor1]{The research was partially supported by the Netherlands Organization for Scientific Research NWO}

\begin{abstract}
In the nonparametric Gaussian sequence space model an $\ell^2$-confidence ball $C_n$ is constructed that adapts to unknown smoothness and Sobolev-norm of the infinite-dimensional parameter to be estimated. The confidence ball has exact and honest asymptotic coverage over appropriately defined `self-similar' parameter spaces. It is shown by information-theoretic methods that this `self-similarity' condition is weakest possible.

\end{abstract}

\begin{keyword}
Adaptation \sep confidence sets \sep 
\MSC 62G15 \sep 62G10 \sep 62G20


 
\end{keyword}

\end{frontmatter}



\section{Introduction}

Successful statistical methodology in high-dimensional and nonparametric models gives rise, either by construction or implicitly, to statistical procedures that \textit{adapt to unknown properties} of the parameter, such as smoothness or sparsity. It is well-known by now (\cite{Low}, \cite{JudLam}, \cite{CL04}, \cite{RobVaart}, \cite{GW08}, \cite{HN11}, \cite{Bull}, \cite{BullNickl}, \cite{NvdG13}, \cite{C13}) that such adaptive procedures cannot straightforwardly be used for uncertainty quantification. Particularly, and unlike in the classical parametric situation, adaptive estimators do \textit{not} automatically suggest valid confidence sets for natural high- or infinite dimensional parameters. Rather, some additional constraints on the parameter space have to be introduced. 

In nonparametric models one such constraint that is naturally compatible with the desired adaptation properties has been studied in \cite{GineNickl}, \cite{HN11}, \cite{Bull}, \cite{SzVZ2}, \cite{CCK14} -- the term `self-similarity assumption' has been associated with this condition, for reasons that will become apparent below. Except for \cite{SzVZ2}, the above references have studied such parameter constraints in the `$L^\infty$-setting' of confidence \textit{bands}, pertaining to the uniform-norm as a statistical loss function. The situation in the `$L^2$-setting' -- where the risk function is induced by the more common integrated squared loss -- is in principle more favourable (see \cite{JudLam}, \cite{CL06}, \cite{RobVaart}, \cite{BullNickl}, \cite{CLM14}, {\cite{SzVZ3}}), and for certain ranges of parameter spaces such `self-similarity' conditions are simply not necessary. However, as will be explained below, for the most meaningful adaptation problems that range over a full scale of Sobolev spaces with possibly unbounded Sobolev-norm of the function to be estimated, the situation becomes more delicate and `self-similarity conditions' are relevant again. 

In the present article we consider the basic nonparametric sequence space model and provide minimal $\ell^2$-type self-similarity constraints on a Sobolev - parameter space that cannot be improved upon from an information theoretic point of view. We also show that an easy to construct, asymptotically exact, confidence ball based on the idea of unbiased risk estimation performs optimally under such constraints. In contrast to most constructions in the literature, no `under-smoothing' is necessary, and the confidence set adapts to minimax rate of convergence and radius constant. 

The interest in this problem is partly triggered by recent progress on the understanding of the frequentist properties of Bayesian uncertainty quantification methods in \cite{SzVZ2}, where $L^2$-type self-similarity conditions have been employed successfully. Combined with some arguments of \cite{SzVZ3} our results imply that natural nonparametric Bayes approaches based on Gaussian priors with hierarchical or {maximum marginal likelihood} empirical Bayes prior specification of the smoothness parameter do \textit{not} achieve the information theoretic limits of uncertainty quantification.

As usual our ideas and techniques carry over from the sequence space model to more common nonparametric regression and density estimation problems, both constructively by virtue of the $L^2 \sim \ell^2$ isometry of the loss functions, and more fundamentally through asymptotic equivalence theory for statistical experiments.

\section{Main results}\label{sec: main}

Consider observations $Y=(y_k: k \in \mathbb N)$ in the Gaussian sequence space model
\begin{align}
y_{k} = f_{k} + \frac{1}{\sqrt n} g_{k}, ~~g_{k} {\stackrel{i.i.d.}{\sim}} N(0,1), ~ k \in \mathbb N,\label{model}
\end{align}
and write ${\Pr}_f$ {or ${\Pr}_f^{(n)}$} for the law of $(y_k: k \in \mathbb N)$. The symbol $\E_f$ {or $\E_f^{(n)}$} denotes expectation under the law $\Pr_f$. Let us assume that the unknown sequence of interest $f=(f_{k})\in\ell^2$ belongs to a Sobolev ball, that is, an ellipsoid in $\ell^2$ of the form
\begin{align*}
S^{s}(B)=\{f\in\ell^2: \|f\|_{s,2} \le B\},~~s>0, ~B>0,
\end{align*}
where the Sobolev norm is given by $$\|f\|_{s,2}^2=\sum_{k=1}^{\infty} f_k^2 k^{2s}.$$ Note that $\|\cdot\|_2 \equiv \|\cdot\|_{0,2}$ is the usual $\ell^2$-norm. 

\smallskip

The parameters $B,s>0$ are typically not available a priori, and the challenge arises to adapt to their unknown values in a data-driven way. We will consider adaptation to `smoothness degrees' $s$ in any fixed window $[s_{\min},s_{\max}]$, and to the `radius' $B \in [b, \infty)$. Here $0<s_{\min}<s_{\max}<\infty$ are fixed and known parameters whereas $b>0$ is a (not necessarily known) lower bound for $B$. 

\smallskip

It is well known and not difficult to prove that adaptive estimators $\hat f_n = \hat f_n(Y)$ exist that attain the minimax optimal $\ell^2$-risk for every ellipsoid $S^s(B)$:
\begin{equation} \label{adapto}
\sup_{f\in S^{s}(B)} E_{f}\|\hat{f}_n-f\|_2\leq K(s) B^{1/(2s+1)}n^{-s/(2s+1)},~~ \forall s>0,~\forall B >0,
\end{equation}
where the constant $K(s)>0$ depends only on $s$. In fact even exact adaptation to the minimax constant $K(s)$ is possible by suitable Stein-type shrinkage estimators (see Section 3.7 in \cite{TSY}).

In this paper we focus on the construction of \textit{confidence sets} $C_n$ for $f$ in $$\bigcup_{s\in[s_{\min},s_{\max}]} S^{s}(B),~~~B \ge b \text{ arbitrary},$$ that reflect the risk bound (\ref{adapto}) -- that is, we want to find a data-driven subset $C_n$ of $\ell_2$ that contains $f$ with $\Pr_f$-probability at least $1-\alpha$ (where $0<\alpha<1$ is a chosen significance level), and we \textit{also} want $C_n$ to have $\ell_2$-diameter of correct order $B^{1/(2s+1)}n^{-s/(2s+1)}$ up to possibly a multiplicative constant $K'(s)$ (we do not consider adaptation to the exact minimax constant here). Just as $\hat f_n(Y)$ above, $C_n=C_n(Y,\alpha)$ should be adaptive and hence \textit{not} depend on the unknown values $s,B$. 

In the special case $s_{\max} < 2s_{\min}$ this is possible by adapting the proof of Theorem 3A in \cite{BullNickl} to the sequence space setting. However, in the general setting $s_{\max}>2s_{\min}$ relevant in nonparametric statistics, the construction of such a confidence set is not possible, and a valid confidence set always has `worst case' diameter coming from the maximal model $S^{s_{\min}}(B)$ (this follows, e.g., from the proof of Theorem 1 in \cite{BullNickl}, see also Theorem 8.3.5 in \cite{GineNickl_book}). New constraints on the parameter space $S^s(B)$ need to be introduced. For instance, if an upper bound $B_0$ on the radius $B$ known, a testing approach as in \cite{BullNickl} could be used to construct an adaptive confidence set that is honest over a sequence of parameter spaces that asymptotically ($n \to \infty$) contains the \textit{maximal} parameter space $S^{s_{\min}}(B_0)$. It is also proved in Theorem 4 in \cite{BullNickl} that such a result is \textit{impossible without} the bound $B_0$ on $B$ -- for unbounded $B$ some functions from the $s_{\min}$-Sobolev space have to be permanently removed for `honest' inference to be possible (the results in \cite{BullNickl} are in the i.i.d.~sampling model but apply in our simpler setting too). In order to remove `as few functions as possible' we shall consider -- inspired by \cite{PicTri}, \cite{GineNickl}, \cite{Bull} -- a `self-similarity' constraint, which in effect enforces a certain signal-strength condition on the sequence $(f_k: k \in \mathbb N)$.

\subsection{Self-similarity conditions}

For $s \in  [s_{\min}, s_{\max}]$, `self-similarity' function $\epsilon: [s_{\min}, s_{\max}] \to (0,1],~J_0 \in \mathbb N$, $0<b<B<\infty$, and constant $c(s)=16\times2^{2s+1}$, define `self-similar' classes
\begin{align}\label{def: selfsim}
& ~~~~~~~~S^{s}_{\eps(s)} \equiv S^{s}_{\eps(s)}(b,B,J_0)\equiv \\ 
& \bigg\{f \in \ell^2: \|f\|_{s,2} \in [b,B]: \sum_{k {=} 2^{J (1-\epsilon(s))}}^{2^J} f_{k}^2 \ge c(s)\|f\|_{s,2}^2 2^{-2Js}~\forall J \in \mathbb N, J \ge J_0 \bigg\},\notag
\end{align}
where the notation $\sum_{k=a}^{b}c_k$ for $a,b\in\mathbb{R}$ stands for $\sum_{k=\lceil a\rceil}^{\lfloor b\rfloor}c_k$ throughout the whole paper.
Note that $\|f\|_{s,2}<\infty$ implies, for all $J \in \mathbb N$, $$\sum_{k \ge 2^{J (1-\epsilon(s))}} f_k^2 \le \|f\|_{s,2}^2 2^{-2J(1-\epsilon(s))s} = \|f\|_{s,2}^2 2^{-2Js} \times 2^{2J\epsilon(s)s}$$ and for `self-similar' functions this upper bound needs to be matched by a lower bound, accrued repeatedly over coefficient windows $k \in [2^{J (1-\epsilon(s))}, {2^J}], J \ge J_0$, that is not off by more than a factor of $2^{2J\epsilon(s)s}/c(s)$. As a consequence the regularity of $f$ is approximately identified across \textit{all} scales $J \ge J_0$.

If condition $\eqref{def: selfsim}$ holds for some $\eps(s)>0$ then it also holds for $c(s)=16\times 2^{2s+1}$ replaced by an arbitrary small positive constant and any $\eps'(s)>\eps(s)$ (for $J_0$ chosen sufficiently large). In this sense the particular value of $c(s)$ is somewhat arbitrary, and chosen here only for convenience.

Larger values of $\epsilon(s)$ correspond to weaker assumptions on $f$: Indeed, increasing the value of $\epsilon(s)$ makes it easier for a function to satisfy the self-similarity condition, as the lower bound is allowed to accrue over a larger window of `candidate' coefficients, and since the `tolerance factor' $2^{2J\epsilon(s)s}$ in the {lower} bound increases. In contrast, smaller values of $\epsilon(s)$ require a strong enough signal in blocks of comparably small size.

\smallskip

\textit{We shall demonstrate that signal strength conditions enforced through the `self-similarity' function $\epsilon(s)$ allow for the construction of honest adaptive confidence balls over the parameter space$$ \bigcup_{s_{\min} \le s \le s_{\max}} S^s_{\epsilon(s)},$$ with performance resembling the adaptive risk bound (\ref{adapto}). We will effectively show that $$\epsilon(s) <\frac{1}{2}~\forall s$$ is a necessary condition for the construction of such adaptive confidence sets (when $s_{\max}>2s_{\min}$), whereas a sufficient condition is $$\epsilon(s) < \frac{s}{2s+1/2} ~\forall s.$$  As $s \to \infty$ we have $s/(2s+1/2) \to 1/2$, showing that the necessary condition cannot be improved upon.} 
  
\smallskip

Comparing to the self-similarity condition (3.4) in \cite{SzVZ2}, which for $f\in S^{s}(B)$ and transposed into our notation, requires for some $\eta>0$,
\begin{align}
\sum_{k=2^J/\rho}^{2^{J}}f_k^2\geq\eta B^2 2^{-2Js},\quad\text{for all $J\geq J_0$ and some $\rho>1$},\label{def: BayesSelfsim}
\end{align}
one can easily see that the self-similarity condition $\eqref{def: selfsim}$ is strictly weaker, both in terms of the window sizes along which the lower bound has to accrue, and in terms of the lower bound itself. One can show that in the context of \cite{SzVZ2} their stronger assumption is actually necessary (for the particular marginal likelihood empirical Bayes procedure used there). Furthermore we note that (as a consequence of \cite{SzVZ3}) hierarchical Bayes methods behave similarly to the {maximum} marginal likelihood empirical Bayes method in the sense that the self-similarity condition $\eqref{def: BayesSelfsim}$ can not be relaxed. Our results imply that this is an artefact of the above mentioned adaptive Bayesian approaches, and that more refined nonparametric techniques can reach the information-theoretic limits for adaptive confidence sets in $\ell^2$. It is conceivable, however, that an appropriately modified empirical Bayes method might achieve the information theoretic limits derived in the present paper; see \cite{SzVZ3} for some related results and ideas.

\smallskip

Before we proceed with our main results let us clarify that the statistical complexity of the estimation problem did not decrease quantitatively by introducing the self-similarity constraint: The minimax estimation rate over the class $\eqref{def: selfsim}$ is equal to the minimax rate over the Sobolev class $S^{{s}}(B)$. 
\begin{theorem}\label{Thm: minimax}
For any fixed values of $0<b <B, J_0 \in \mathbb N, \eps\in(0,1)$, the minimax rate of estimation over all self-similar functions $S^{s}_\eps \equiv S^{{s}}_{\eps}(b,B,J_0)$ in the Gaussian sequence model $\eqref{model}$ is of order $$\inf_{\hat{T}_n=\hat{T}_n(Y)} \sup_{f \in S^{s}_\eps} E_f \|\hat{T}_n-f\|_2 \simeq n^{-{s}/(2{s}+1)}.$$
\end{theorem}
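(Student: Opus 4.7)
I would split the statement into the standard upper and lower bounds. For the upper bound, the inclusion $S^s_\eps(b,B,J_0) \subseteq S^{s}(B)$ reduces the problem to the classical minimax rate over Sobolev balls: a projection estimator $\hat T_n = (y_k \mathbf{1}_{k \le N_n})_{k\ge 1}$ with cutoff $N_n \asymp n^{1/(2s+1)}$ achieves $\sup_{f \in S^s(B)} \EE_f \|\hat T_n - f\|_2^2 \lesssim B^2 N_n^{-2s} + N_n/n \lesssim n^{-2s/(2s+1)}$ by the usual bias--variance trade-off.

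The lower bound I would obtain by an Assouad/Fano cube argument; the main task is to exhibit an $n^{-s/(2s+1)}$-separated family of hypotheses that all lie in $S^s_\eps(b,B,J_0)$ rather than only in $S^s(B)$. Fix a base sequence $f_0$ with $f_{0,k}^2 = A^2 k^{-2s-1-2\gamma}$ for $k \ge 1$, where $\gamma \in (0, s\eps/(1-\eps))$ is small and $A$ is chosen so that $\|f_0\|_{s,2}^2$ lies strictly between $b^2$ and $B^2$. An integral comparison gives
\[
\sum_{k=2^{J(1-\eps)}}^{2^J} f_{0,k}^2 \gtrsim A^2 \, 2^{-2J(1-\eps)(s+\gamma)},
\]
whose ratio to the required lower bound $c(s)\|f_0\|_{s,2}^2 2^{-2Js}$ is of order $2^{2J(s\eps - \gamma(1-\eps))}$, which diverges as $J \to \infty$. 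Thus $f_0 \in S^s_\eps(b,B,J_0)$ with a self-similarity margin that grows exponentially in $J$. Next I introduce cube perturbations at the critical scale $J^* = \lfloor \log_2 n/(2s+1)\rfloor$: set $M = 2^{J^*}$, $I_n = \{M+1,\ldots,2M\}$, and for $\omega \in \{-1,+1\}^M$ let $f_\omega = f_0 + \delta_n \sum_{k \in I_n} \omega_{k-M} e_k$ with $\delta_n = c_0/\sqrt n$. The perturbation increases $\|f\|_{s,2}^2$ by $\delta_n^2 \sum_{k \in I_n} k^{2s} \le 2^{2s+1} c_0^2$, which for $c_0$ small keeps $\|f_\omega\|_{s,2}$ inside $[b,B]$; the self-similarity sum can only grow, while the right-hand side $c(s)\|f_\omega\|_{s,2}^2 2^{-2Js}$ is inflated by a constant factor $1+O(c_0^2)$, which is absorbed by the exponential margin above. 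Hence $f_\omega \in S^s_\eps$ for every $\omega$ and all $n$ sufficiently large.

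To conclude, by Varshamov--Gilbert extract $L \ge 2^{M/8}$ sign vectors with pairwise Hamming distance at least $M/8$. The corresponding $f_\omega$'s are $\ell^2$-separated by $2\delta_n \sqrt{M/8} \asymp n^{-s/(2s+1)}$, while $\KL(\Pr_{f_\omega}, \Pr_{f_{\omega'}}) = (n/2)\|f_\omega - f_{\omega'}\|_2^2 \le 2c_0^2 M \le (1/16)\log L$ once $c_0$ is small enough. Fano's inequality then gives $\inf_{\hat T_n} \sup_{f\in S^s_\eps} \EE_f \|\hat T_n - f\|_2 \gtrsim n^{-s/(2s+1)}$. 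The genuine obstacle is the self-similarity preservation step: unlike standard lower bounds over Sobolev balls one cannot place the perturbation at an arbitrary base point, and $f_0$ must carry enough slack at every scale $J \ge J_0$ to absorb the $O(c_0^2)$ relative inflation of $\|f\|_{s,2}^2$. The polynomial decay $k^{-s-1/2-\gamma}$ is chosen precisely to make this margin grow geometrically in $J$.
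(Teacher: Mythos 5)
Your proposal is correct and essentially matches the paper's proof: both take the upper bound from the inclusion $S^s_\eps \subseteq S^s(B)$, and both derive the lower bound by constructing a base signal of effective smoothness $r=s+\gamma>s$ with $(1-\eps)r<s$, so the self-similarity lower bound holds with geometrically growing slack, then perturbing at the critical scale $J^*\approx \log_2 n/(2s+1)$ and invoking Varshamov--Gilbert plus a Fano-type bound. The only difference is cosmetic: the paper supports the base only on the dyadic half-blocks $Z_l^0$ and places $\{0,1\}$-perturbations on the disjoint second halves $Z_{j_n}^1$, which sidesteps your extra (but correctly handled) step of checking that $\delta_n$ eventually dominates $f_{0,k}$ on $I_n$ so that $\pm$-perturbations cannot shrink the self-similarity sum.
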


\subsection{Construction of the confidence ball}\label{sec: construction}

In this subsection we give an algorithm which provides asymptotically honest and adaptive confidence sets over the collection of self-similar functions. As a first step we split the `sample' into two parts ${y'=(y_{k}')}$ and ${y''=(y_{k}'')}$ (with Gaussian noise ${g'_k}$ and ${g''_k}$ {with variance 2}, respectively, see \cite{RobVaart} for instance), inflating the variance of the noise by $2$, with distributions $\Pr_1$ and $\Pr_2$, and expectations $\E_1$ and $\E_2$, respectively. Furthermore {by slightly abusing our notation introduced in the beginning of Section \ref{sec: main} we denote in this section} by $\Pr_f$ and $\E_f$ the joint distribution and the corresponding expected value, respectively. 
 
\smallskip 
 
Using the first sample {$y'$} we denote by $\hat{f}_n(j)$ the linear estimator with `resolution level' (=truncation point) $j\in \mathbb N$,
\begin{equation}\label{eq: linEst}
\hat{f}_n(j) \equiv   {(y_{k}')}_{1\le k \le 2^j}, ~~\E_1\hat{f}_n(j)  =  (f_{k})_{1\le k \le 2^j}= K_j(f),
\end{equation}
where $K_j$ denotes the projection operator onto the first $2^j$ coordinates. Let us consider minimal and maximal truncation levels $j_{\min} = \underline \sigma \log_2 n,~j_{\max} = \overline \sigma \log_2 n$ -- for concreteness we take $\underline \sigma = 1/(2s'+1)$ for arbitrary $s'>s_{\max}$ and $\overline\sigma=1$, but other choices are {also} possible. We define a discrete grid $\mathcal J$ of resolution levels $$\mathcal J = \left\{j \in \mathbb N: j \in [j_{\min}, j_{\max}] \right\} $$ that has approximately $\log_2 n$ elements.
Using Lepski's method define a first estimator by
\begin{equation}\label{def: barj}
{\hat{j}_n} \equiv \min\left\{j \in \mathcal J: \|\hat{f}_n(j)-\hat{f}_n(l)\|_2^2 \le 4\times \frac{2^{l{+1}}}{n}~~\forall l>j, l \in \mathcal J \right\}.
\end{equation}

While ${\hat{j}_n}$ is useful for adaptive estimation via $\hat{f}_n( {\hat{j}_n})$, for adaptive confidence sets we shall need to systematically increase ${\hat{j}_n}$ by a certain amount -- approximately by a factor of two. To achieve this let us take a fixed parameter $0<m<1$ and choose parameters $0<\kappa_1,\kappa_2<1$ that satisfy

\begin{align}
m<\frac{2s_{\min}+1/2}{s_{\min}+(s_{\min}+1/2)/\kappa_1}<1\quad\text{and}\quad 0<\frac{1+\kappa_1}{2\kappa_2}<\kappa_2<1.\label{def: kappa}
\end{align}
Intuitively, given $\delta>0$ we can {choose ${m,}\kappa_1, \kappa_2$ such that} {all} lie in $(1-\delta, 1)$ -- the reader may thus think of {$m$ and} the $\kappa_i$'s as constants that are arbitrarily close to one.  Next an `under-smoothed estimate' is defined as
\begin{align}
\hat{J}_n=\lceil J_n\rceil,\quad\text{where}\quad
\frac{1}{ J_n} \equiv \frac{1}{2\kappa_2}\frac{1}{{\hat{j}_n}}-\frac{1-\kappa_2}{2\kappa_2}\frac{1}{\log_2 n}\label{def: hatJ}.
\end{align}
With $\hat{J}_n$ in hand, we use again the sample {$y'$} to construct any standard adaptive estimator $\hat f_n$ for which the conclusions of Theorem~\ref{thm: AdaptiveEst} in the Appendix hold true, and use the second subsample {$y''$} to estimate the squared $\ell^2$-risk of $\hat f_n$: The risk estimate
$$\tilde{U}_n(\hat f_n) = \sum_{k \le 2^{\hat{J}_n}} ({y''_{k}} - \hat f_{n,k})^2 - \frac{2^{\hat{J}_n{+1}}}{n}$$ has expectation (conditional on the first subsample {$y'$})
\begin{align}
\E_2 \tilde{U}_n(\hat f_n) = \sum_{k \le 2^{\hat{J}_n}} (f_{k}-\hat f_{n,k})^2 = \|K_{\hat{J}_n}(f-\hat f_n)\|_2^2.\label{eq: ERadius}
\end{align}
Our $\ell^2$-confidence ball is defined as
\begin{equation}
C_n = \left\{f: \|f-\hat f_n\|_2^2 \le \tilde{U}_n(\hat f_n) + {\sqrt{8}}\gamma_{\alpha}\frac{2^{\hat{J}_n/2}}{n} \right\},\label{def: Cn}
\end{equation}
where $\gamma_\a$ denotes the $1-\a$ quantile of the standard normal $N(0,1)$ random variable, $0<\alpha<1$.
We note that, unlike \cite{Bull},\cite{BullNickl} or \cite{SzVZ2}, we do not require knowledge of any self-similarity or radius parameters in the construction; we only used the knowledge of $s_{\max}$ in the construction of the discrete grid $\mathcal{J}$ and the parameters $m$ and $s_{\min}$ in the choice of $\kappa_2$.

\smallskip
However, the above construction has also its limitations, it will not work for every self-similarity function $\eps(\cdot)$, hence we have to introduce some additional restriction. Assume that the function $\epsilon(\cdot)$ satisfies
\begin{align}
\sup_{s\in[s_{\min},s_{\max}]}\eps(s)\frac{2s+1/2}{s}\leq m<1,\label{def: eps}
\end{align}
for {a fixed parameter $m\in(0,1)$} introduced in $\eqref{def: kappa}$.

\smallskip

To formulate our main results let us introduce the notation 
\begin{align}
\mathbb S(\eps) = \mathbb{S}(\eps,b,B,J_0)\equiv \cup_{s\in[s_{\min},s_{\max}]}S^{s}_{\eps(s)}(b,B,J_0).\label{def: ColSelfSim}
\end{align} 
for the collection of self-similar functions with regularity ranging between $[s_{\min},s_{\max}]$ and  function $\eps:\,[s_{\min},s_{\max}]\mapsto(0,1)$.

\smallskip

\begin{theorem}\label{thm: Conf}
For any $0<b < B <\infty, J_0 \in \mathbb N,$ and self-similarity function $\eps$ satisfying $\eqref{def: eps}$, the confidence set $C_n$ defined in $\eqref{def: Cn}$ has exact honest asymptotic coverage $1-\a$ over the collection of self-similar functions $\mathbb{S}(\eps)$, i.e.,
\begin{align*}
\sup_{f\in\mathbb{S}(\eps,b,B,J_0)}\Big|{\Pr}_{f}(f\in C_n)-( 1-\a)\Big|\rightarrow0
\end{align*}
as $n\rightarrow\infty$. Furthermore the $\ell^2$-diameter $|C_n|$ of the confidence set is rate adaptive: For every $s\in[s_{\min},s_{\max}], B > b, J_0 \in \mathbb N,$ and $\delta>0$ there exists $C(s, \delta)>0$ such that
\begin{align*}
\limsup_{n\rightarrow\infty}\sup_{f\in S^s_{\eps(s)}(b,B,J_0)}{\Pr}_{f}(|C_n|\geq C(s,\delta)B^{1/(2s+1)}n^{-s/(2s+1)})\leq\delta.
\end{align*}
\end{theorem}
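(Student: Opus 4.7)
The plan is to work conditionally on the first subsample $y'$, so that $\hat f_n$ and $\hat J_n$ become deterministic, and then to exploit the unbiased-risk identity $\eqref{eq: ERadius}$. Decomposing
\[
\|f-\hat f_n\|_2^2 = \|K_{\hat J_n}(f-\hat f_n)\|_2^2 + \|(I-K_{\hat J_n})(f-\hat f_n)\|_2^2
\]
and using $y''_k = f_k + g''_k/\sqrt n$ with $g''_k\sim N(0,2)$ independent of $y'$, one computes
\[
T_n := \tilde U_n(\hat f_n) - \|K_{\hat J_n}(f-\hat f_n)\|_2^2 = \frac{2}{\sqrt n}\sum_{k\le 2^{\hat J_n}}(f_k-\hat f_{n,k})g''_k + \frac{1}{n}\sum_{k\le 2^{\hat J_n}}\bigl((g''_k)^2-2\bigr).
\]
Conditionally on $y'$, the linear summand is Gaussian with variance $8\|K_{\hat J_n}(f-\hat f_n)\|_2^2/n$, while the centred chi-square summand has variance $8\cdot 2^{\hat J_n}/n^2$, so that its standard deviation matches the normalisation $\sqrt 8\, 2^{\hat J_n/2}/n$ appearing in the threshold defining $C_n$.

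The essential preliminary step is to combine Theorem~\ref{thm: AdaptiveEst} with a uniform Lepski-type analysis to obtain, on an event $A_n$ of probability tending to one uniformly in $f\in S^s_{\eps(s)}(b,B,J_0)$, the adaptive risk bound $\|f-\hat f_n\|_2^2 \lesssim B^{2/(2s+1)}n^{-2s/(2s+1)}$ together with the oracle localisation $(1-\eps(s))\log_2 n/(2s+1) \lesssim \hat j_n \lesssim \log_2 n/(2s+1)$. The lower bound on $\hat j_n$ is precisely where self-similarity $\eqref{def: selfsim}$ enters: for $j<(1-\eps(s))J$ and $l=J$, the block sum $\sum_{k=2^{J(1-\eps(s))}}^{2^J} f_k^2$ is bounded below by $c(s)\|f\|_{s,2}^2 2^{-2Js}$, which for $J$ just below $\log_2 n/(2s+1)$ exceeds the Lepski noise scale $\simeq 2^J/n$, forcing the criterion in $\eqref{def: barj}$ to fail at any too-small $j$. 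Through $\eqref{def: hatJ}$ this translates into matching envelopes on $\hat J_n$ of order $2\kappa_2(1-\eps(s))\log_2 n/(2s+1)$ from below and $2\kappa_2 \log_2 n/(2s+\kappa_2)$ from above.

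Coverage then follows from two observations. First, $\|(I-K_{\hat J_n})f\|_2^2 \le B^2 2^{-2s\hat J_n} = o(2^{\hat J_n/2}/n)$, since the lower envelope on $\hat J_n$ combined with the coupled constraints $\eqref{def: kappa}$ and $\eqref{def: eps}$ delivers $2\kappa_2(1-\eps(s))(2s+1/2) > 2s+1$ (equivalently, $\eps(s) < 2s/(4s+1)$ when $\kappa_2, m$ are close to one). Second, the linear part of $T_n$ has conditional variance $O(n^{-(4s+1)/(2s+1)})$, which is of smaller order than the quadratic variance $\simeq n^{-4s/(2s+\kappa_2)}$ because $\eqref{def: kappa}$ forces $\kappa_2>1/\sqrt 2 > 2s/(4s+1)$. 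A conditional Lindeberg--Feller CLT applied to the centred chi-square sum (i.i.d.\ with finite fourth moment) then yields $T_n/(\sqrt 8\,2^{\hat J_n/2}/n)\gaatd N(0,1)$, and assembling these pieces gives the exact asymptotic coverage $1-\alpha$, uniformly over $\mathbb S(\eps)$.

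For the rate-adaptive diameter, the squared radius of $C_n$ is at most $\tilde U_n(\hat f_n)+\sqrt 8\,\gamma_\alpha 2^{\hat J_n/2}/n$. The deterministic second term is bounded by $n^{-2s/(2s+\kappa_2)} \le n^{-2s/(2s+1)}$ since $\kappa_2\le 1$, while the random first term concentrates around $\|K_{\hat J_n}(f-\hat f_n)\|_2^2 \le \|f-\hat f_n\|_2^2 \lesssim B^{2/(2s+1)}n^{-2s/(2s+1)}$ with fluctuations $O_P(2^{\hat J_n/2}/n)$ by the same variance computation. The chief technical obstacle throughout is the preliminary uniform Lepski analysis: the self-similarity index $\eps(s)$, the grid $\mathcal J$, and the tuning constants $\kappa_1,\kappa_2,m$ must be calibrated so that the inflation $\hat j_n \mapsto \hat J_n$ defined in $\eqref{def: hatJ}$ simultaneously suppresses the residual bias below the noise scale $2^{\hat J_n/2}/n$ and keeps the confidence radius at the adaptive rate $n^{-s/(2s+1)}$; once these deterministic oracle inequalities are established, the remaining Gaussian tail bounds and CLT steps are standard.
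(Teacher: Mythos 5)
Your proposal tracks the paper's own proof very closely: the Lepski step pinning $\hat j_n$ between $(1-\eps(s))j_n^*$ and $j_n^*$ (the paper's Lemma~\ref{lem: EstRes}), the under-smoothing via $\eqref{def: hatJ}$, the decomposition of $\tilde U_n(\hat f_n)-\|K_{\hat J_n}(f-\hat f_n)\|_2^2$ into a Gaussian cross term and a centred chi-square term, the negligibility of both the cross term and the tail $\|(I-K_{\hat J_n})(f-\hat f_n)\|_2^2$ relative to the pivot scale $2^{\hat J_n/2}/n$ using $\eqref{def: kappa}$ and $\eqref{def: eps}$, the normal approximation for the chi-square term, and Markov's inequality for the radius are all exactly the ingredients the paper uses.

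The one genuine gap is in the normal approximation step. You invoke a conditional Lindeberg--Feller CLT, which delivers pointwise convergence $\P_f(\cdot)\to 1-\alpha$ for each fixed $f$, but the theorem asserts $\sup_{f\in\mathbb S(\eps)}\big|\P_f(f\in C_n)-(1-\alpha)\big|\to 0$, and a plain CLT does not give you the supremum. The paper closes this by conditioning on $\hat J_n=j$, summing over $j$ in the grid $\mathcal J$, and applying Berry--Esseen to each term, which yields the quantitative bound $3\rho\sigma^{-3}2^{-\kappa_2\log_2 n/(2s'+\kappa_2)}$ in $\eqref{eq: quantile}$ that is explicitly uniform over $f$ because the summands $((g''_k)^2-2)$ do not depend on $f$ and $\hat J_n$ is bounded below by a fixed power of $\log_2 n$ on the good event. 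Your argument can be repaired with exactly this substitution, so the gap is one of rigour rather than concept. A smaller issue: the exponent you quote for the variance of the quadratic part, $n^{-4s/(2s+\kappa_2)}$, is not quite right (it should be derived from $2^{\hat J_n}/n^2\approx n^{1/(2\hat s_n+1/2)-2}$ with $\hat s_n$ at its upper envelope $(1+\kappa_1)s/(2\kappa_2)$, as in the paper's display below $\eqref{eq: help4}$), and the side remark $\kappa_2>1/\sqrt 2$ is correct but not actually the reason the comparison succeeds; but the comparison does go the right way, so the substance is unaffected. Finally, you should note explicitly, as the paper does via Lemma~\ref{lem: EstRes} applied to whatever class $f$ lies in, that a given $f$ may belong to several classes $S^s_{\eps(s)}$ and the oracle localisation holds for each such $s$.
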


\subsection{Information theoretic lower bound}\label{sec: LBInf}
The assumption of self-similarity in Theorem \ref{thm: Conf} could be entirely removed when $s_{\max}<2s_{\min}$, by adapting the proof of Theorem 3A in \cite{BullNickl} to the sequence space setting considered here. In the more realistic setting $s_{\max}>2s_{\min}$ this is, however, not the case, as our results below will imply. We shall prove that for general adaptation windows $[s_{\min}, s_{\max}]$, the self-similarity function $\eps(s)>0$ can not exceed $1/2$ for an honest and adaptive confidence set to exist over the class $\mathbb S(\eps)$. This will be deduced from the following general lower bound on the size of honest confidence sets for constant self-similarity {function} $\eps(\cdot) \equiv \eps>0$ and two regularity levels $s>r$.

\begin{theorem}\label{Thm: InfLB}
Fix $\a\in(0,1/2),$ $0<\eps(\cdot)\equiv \eps<1$, $0<r<r'<{r/(1-\eps)<}\infty,$ and let  $s\in(r',r/(1-\eps))$ be arbitrary. Then there does not exist a confidence set $C_n$ in $\ell^2$ which satisfies for every $0<b < B, J_0 \in \mathbb N$,
\begin{align}
&\liminf_{n\rightarrow\infty}\inf_{f\in S_{\eps}^{r}(b,B,J_0)\cup S_{\eps}^{s}(b,B,J_0)} {\Pr}_f(f\in C_n)\ge 1-\alpha, \label{eq: Hon}\\
&\sup_{f\in S^{s}_{\eps}(b,B,J_0)}{\Pr}_{f}(|C_n|>r_n) \stackrel{{n \to \infty}}{\to} 0 \label{eq: Adap},
\end{align}
for any sequence $r_n=o(n^{-\frac{r'}{2r'+1/2}})$.
\end{theorem}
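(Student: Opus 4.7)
My approach is the classical reduction of confidence-set construction to composite hypothesis testing combined with an Ingster-type chi-square calculation, specialized to the self-similar parameter spaces. Suppose for contradiction that such a $C_n$ exists. I would fix a reference function $f_0 \in S^s_\epsilon(b,B,J_0)$ and a family of alternatives $\{f_\tau = f_0 + h_\tau : \tau \in \{\pm 1\}^L\} \subset S^r_\epsilon(b,B,J_0)$ with common separation $\rho^* = \|h_\tau\|_2 \asymp n^{-r/(2r+1/2)}\omega_n^{-r}$ for some slowly diverging $\omega_n$, so that $\rho^* \gg r_n$ using the slack from $r<r'$ in the map $s\mapsto s/(2s+1/2)$. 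Setting $A_n = \{f_0\in C_n\}\cap\{|C_n|<\rho^*\}$, honesty \eqref{eq: Hon} at $f_0$ together with the adaptation property \eqref{eq: Adap} gives $\Pr_{f_0}(A_n)\ge 1-\alpha+o(1)$, while on $A_n$ no $f_\tau$ can lie in $C_n$ (otherwise $|C_n|\ge\|f_\tau-f_0\|_2 = \rho^*$), and hence honesty \eqref{eq: Hon} at each $f_\tau\in S^r_\epsilon$ yields $\Pr_{f_\tau}(A_n)\le\alpha+o(1)$ uniformly in $\tau$. Averaging against the uniform prior on $\{\pm 1\}^L$ and using $|\bar\Pr(A_n)-\Pr_{f_0}(A_n)|\le\tfrac12\sqrt{\chi^2(\bar\Pr,\Pr_{f_0})}$ leads to
\begin{equation*}
1-2\alpha \,\le\, o(1) + \tfrac12\sqrt{\chi^2(\bar\Pr,\Pr_{f_0})},
\end{equation*}
contradicting $\alpha<1/2$ provided $\chi^2 = o(1)$.

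For the construction I would take $f_{0,k} = c\,k^{-s-1/2-\eta}$ for a small $\eta>0$ and a constant $c$ making $\|f_0\|_{s,2}\in[b,B/2]$. An elementary computation yields
\begin{equation*}
\sum_{k\in[2^{J(1-\epsilon)},\,2^J]} f_{0,k}^2 \,\asymp\, 2^{-2J(1-\epsilon)(s+\eta)},
\end{equation*}
which exceeds $c(s)\|f_0\|_{s,2}^2\, 2^{-2Js}$ for all $J\ge J_0$ large (once $\eta < \epsilon s/(1-\epsilon)$), verifying $f_0\in S^s_\epsilon$. The crucial step uses the hypothesis $s(1-\epsilon)<r$ (equivalent to $s<r/(1-\epsilon)$): the same block sums exceed $c(r)\|f_0\|_{r,2}^2\, 2^{-2Jr}$ by a factor $2^{2J[r-(1-\epsilon)(s+\eta)]}\to\infty$, so that $f_0$ is simultaneously self-similar at level $r$. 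Set $h_\tau = \rho_n\sum_{l=1}^L \tau_l\, e_{k_l}$ with $L\asymp 2^J$ distinct indices $k_1,\dots,k_L\in[2^{J(1-\epsilon)},2^J]$ disjoint from the support of $f_0$ (to kill the cross terms), $2^J\asymp n^{2/(4r+1)}\omega_n$, and $\rho_n = A\cdot 2^{-J(r+1/2)}$ for a fixed $A>0$ tuned to accommodate the Sobolev and self-similarity constants at level $r$. Then $\|h_\tau\|_2\asymp A\, 2^{-Jr}$, $\|h_\tau\|_{r,2}^2\asymp A^2$, $\|f_\tau\|_{r,2}\in[b,B]$, and self-similarity of $f_\tau$ at level $r$ is inherited from $f_0$ at scales $J'\ne J$ while at scale $J$ the perturbation contributes $L\rho_n^2\asymp 2^{-2Jr}$ of the required order. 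The standard Cameron--Martin calculation for Gaussian shifts then gives
\begin{equation*}
1+\chi^2(\bar\Pr,\Pr_{f_0}) = \prod_{l=1}^L \cosh(n\rho_n^2) \,\le\, \exp\bigl(L n^2\rho_n^4/2\bigr),
\end{equation*}
and our scaling delivers $Ln^2\rho_n^4\asymp A^4\cdot n^2 \cdot 2^{-J(4r+1)} \asymp A^4\omega_n^{-(4r+1)}\to 0$.

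The main obstacle is verifying self-similarity of $f_\tau$ at \emph{every} scale $J'\ge J_0$ with the correct multiplicative constants $c(r),c(s)$ (and not merely the correct exponents): the perturbation $h_\tau$ lives on the single scale $J$, so self-similarity at all other scales must come from $f_0$ itself, and this is precisely what the strict inequality $s<r/(1-\epsilon)$ enables through the geometric-series slack in the block-sum estimate. The boundary $\epsilon=1/2$ highlighted in the paper's main heuristic surfaces in the limit: as $r'\uparrow\infty$ (whence $s\uparrow r/(1-\epsilon)$), the separation rate $n^{-r/(2r+1/2)}$ approaches $n^{-1/2}$, matching the rate floor corresponding to the sufficient condition from Theorem~\ref{thm: Conf}.
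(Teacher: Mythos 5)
Your argument takes a genuinely different route from the paper's. You run a single-stage Ingster-type two-point mixture test with $n$-dependent alternatives $f_\tau$, whereas the paper uses a recursive diagonalization: it builds a nested sequence $f_m\in S^s_\epsilon$ (with growing $S^s$-norm, which is permitted since $B$ is arbitrary) whose limit $f_\infty$ is a \emph{fixed} element of $S^r_\epsilon$, chooses a subsequence $n_m$ so that at each stage honesty and adaptation are active at $f_{m-1}$, and then shows the three laws $\Pr_{f_{m-1}}$, the mixture over $\Pr_{f_{m,\beta}}$, and $\Pr_{f_\infty}$ are pairwise nearly indistinguishable at time $n_m$. The paper's argument is thus more constructive (a single $f_\infty$ that is not covered infinitely often); yours is shorter and closer to the standard testing reduction. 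Both pivot on the same inequality $s(1-\epsilon)<r$, which you correctly identify as what lets one sequence be simultaneously self-similar at levels $s$ and $r$, and both use the same $\cosh$-based second-moment/$\chi^2$ bound. The key quantitative choices also agree in spirit: you take perturbations at scale $2^J\asymp n^{2/(4r+1)}\omega_n$ with coefficients $\rho_n\asymp 2^{-J(r+1/2)}$; the paper perturbs on blocks $Z^1_{j_m}$ with coefficients $2^{-(r'+1/2)j_m}$ and $n_m\asymp 2^{j_m(2r'+1/2)}$ — i.e.\ you calibrate to $r$ with a slowly diverging $\omega_n$ where the paper calibrates to $r'$; this is just two ways of exploiting the slack $r<r'$, and the resulting separation dominates $r_n$ in both cases.

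However, there is a genuine flaw in your construction as written. You set $f_{0,k}=c\,k^{-s-1/2-\eta}$, which has full support on $\mathbb N$, yet you then claim to choose the perturbation indices $k_1,\dots,k_L\in[2^{J(1-\epsilon)},2^J]$ ``disjoint from the support of $f_0$''. No such indices exist. The fix is exactly what the paper does: reserve disjoint index blocks for the base and the perturbation (in the paper's notation, $f_0$ lives on $\cup_l Z^0_l$ and the perturbations on $Z^1_{j}$), so that the self-similarity block sums for $f_\tau$ receive the $f_0$ contribution and the $h_\tau$ contribution additively without cross terms. Note also that for the $\chi^2$ bound the disjointness is actually unnecessary — the Cameron--Martin shift centers $y_k$ at $f_{0,k}$, so $\bar Z=\prod_l\cosh(\sqrt n\rho_n(y_{k_l}-f_{0,k_l}))e^{-n\rho_n^2/2}$ and $\E_{f_0}\bar Z^2=\cosh(n\rho_n^2)^L$ regardless of whether $f_{0,k_l}\ne 0$. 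Disjointness is only needed to control the cross term $2\rho_n\sum_l\tau_l f_{0,k_l}$ in the block-sum verification of self-similarity of $f_\tau$ at scale $J$, and in $\|f_\tau\|_{r,2}$; for generic parameter values this cross term is not negligible relative to $\sum_{Z^0_{\lceil(1-\epsilon)J\rceil}}f_{0,k}^2$, so you really do need the sparse-support version of $f_0$. With that modification your parameter bookkeeping — fixed $b',B',J_0'$ depending only on $(r,s,\epsilon,\eta,A)$, uniformity of the $o(1)$'s coming from the $\liminf\inf$ and $\sup$ in \eqref{eq: Hon}--\eqref{eq: Adap} — goes through and gives a correct alternative proof.
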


\begin{remark}
Theorem \ref{Thm: InfLB} also holds with $r_n=O(n^{-\frac{r'}{2r'+1/2}})$, but for clarity of the proof we decided to state it in the present form. The $r_n=o(n^{-\frac{r'}{2r'+1/2}})$ version of the theorem is already sufficient to prove the next corollary.
\end{remark}

\begin{corollary}\label{Cor: NoAdap}
Assume that $s_{\max}>2s_{\min}$ and $\eps(\cdot) \equiv \eps>1/2$. Then there does not exist a confidence set $C_n$ in $\ell^2$ which satisfies for every $0<b < B, J_0 \in \mathbb N,$
\begin{align}
\liminf_{n\rightarrow\infty}\inf_{f\in \cup_{s\in[s_{\min},s_{\max}]}S_{\eps}^{s}(b,B,J_0)}{\Pr}_f(f\in C_n)\geq1-\alpha,\label{eq: cor1}
\end{align}
and for all $s\in[s_{\min},s_{\max}]$, $\delta>0$, and {some large enough} any $K>0$
\begin{align}
\limsup_{n\rightarrow\infty}\sup_{f\in S^{s}_{\eps}(b,B,J_0)}{\Pr}_{f}(|C_n|>K n^{-s/(2s+1)})\leq\delta.\label{eq: cor2}
\end{align}
\end{corollary}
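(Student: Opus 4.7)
My plan is to argue by contradiction via Theorem~\ref{Thm: InfLB}. Suppose a confidence set $C_n$ satisfying \eqref{eq: cor1} and \eqref{eq: cor2} exists. The idea is to choose the parameters $r,r',s$ in Theorem~\ref{Thm: InfLB} so that the corollary's adaptation rate $n^{-s/(2s+1)}$ falls strictly inside the forbidden regime $o(n^{-r'/(2r'+1/2)})$. Concretely, set $r=s_{\min}$. The joint assumption $\eps>1/2$ and $s_{\max}>2s_{\min}$ makes both $r/(1-\eps)=s_{\min}/(1-\eps)$ and $s_{\max}$ exceed $2s_{\min}$, so the interval $(2s_{\min},\min\{s_{\max},s_{\min}/(1-\eps)\})$ is nonempty; pick any $s$ in it and any $r'\in(r,s/2)$, the latter being nonempty since $s>2r$. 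Then $s\in[s_{\min},s_{\max}]$, $0<r<r'<r/(1-\eps)$, and $s\in(r',r/(1-\eps))$, so the hypotheses of Theorem~\ref{Thm: InfLB} are met.

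A short algebra check, $s(2r'+1/2)>r'(2s+1)\Leftrightarrow s>2r'$, produces the key rate gap
\begin{equation*}
\frac{s}{2s+1}-\frac{r'}{2r'+1/2}=:\eta>0,
\end{equation*}
so in particular $n^{-s/(2s+1)}=o(n^{-r'/(2r'+1/2)})$. The subtlety is that \eqref{eq: cor2} only yields ``$\limsup\leq\delta$'' while Theorem~\ref{Thm: InfLB} demands the diameter probability to tend to zero, so a diagonal construction is needed. For each $k\in\NN$ I would invoke \eqref{eq: cor2} with $\delta_k=1/k$ to obtain $K_k>0$ and thresholds $N_k\uparrow\infty$ such that $\sup_{f\in S^s_{\eps}}\Pr_f(|C_n|>K_k n^{-s/(2s+1)})\leq 2/k$ for all $n\geq N_k$, and with $N_k$ large enough that $K_k N_k^{-\eta}\leq 1/k$. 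Define $r_n=K_k n^{-s/(2s+1)}$ for $n\in[N_k,N_{k+1})$. On each block $r_n n^{r'/(2r'+1/2)}=K_k n^{-\eta}\leq K_k N_k^{-\eta}\leq 1/k$, so $r_n=o(n^{-r'/(2r'+1/2)})$; and $\sup_{f\in S^s_{\eps}}\Pr_f(|C_n|>r_n)\leq 2/k\to 0$, which is \eqref{eq: Adap}.

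Since $r=s_{\min}$ and $s\in[s_{\min},s_{\max}]$, hypothesis \eqref{eq: cor1} applied to these two regularities immediately yields \eqref{eq: Hon} on $S^r_{\eps}\cup S^s_{\eps}$. Hence the assumed $C_n$ contradicts Theorem~\ref{Thm: InfLB}. The main obstacle is exactly the adaptation-formulation mismatch just mentioned; its resolution hinges on the strict rate gap $\eta>0$, which is itself guaranteed jointly by $\eps>1/2$ and $s_{\max}>2s_{\min}$, the assumptions that open up the parameter window $(2r,r/(1-\eps))$ needed to choose $s$ in the first place.
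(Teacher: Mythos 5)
Your argument is correct and follows the same route as the paper's own proof: choose parameters $r,r',s$ satisfying the hypotheses of Theorem~\ref{Thm: InfLB} in such a way that the desired adaptive rate $n^{-s/(2s+1)}$ is $o(n^{-r'/(2r'+1/2)})$, then invoke the theorem to get a contradiction. Your choice $r=s_{\min}$ with $s\in(2s_{\min},\min\{s_{\max},s_{\min}/(1-\eps)\})$ and $r'\in(s_{\min},s/2)$ is equivalent in substance to the paper's $s/2>r'>r>\max\{(1-\eps)s,s_{\min}\}$, and the hypothesis verification is right.

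The extra step you add — the diagonalization converting \eqref{eq: cor2}'s ``\,$\limsup\le\delta$ for each $\delta$\,'' into the ``$\to 0$'' form required by \eqref{eq: Adap} — is a genuine improvement in rigor over the paper's own terse argument, which silently conflates the two formulations. It is worth keeping. One point you should tidy up, however: Theorem~\ref{Thm: InfLB} requires a \emph{single fixed} sequence $r_n$ for which \eqref{eq: Adap} holds \emph{for every} $(b,B,J_0)$, and internally its proof invokes \eqref{eq: Adap} along a sequence of self-similar balls whose parameters $(b,B,J_0)$ grow with $m$. Your diagonalization, as written, fixes one implicit choice of $(b,B,J_0)$. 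To fully match the hypothesis, diagonalize simultaneously over a monotone exhaustion, e.g.\ $(b_k,B_k,J_{0,k})=(1/k,k,k)$: invoke \eqref{eq: cor2} with $\delta=1/k$ and parameters $(1/k,k,k)$ to get $K_k$, choose $N_k$ large enough that both the tail probability bound and $K_kN_k^{-\eta}\le 1/k$ hold, and set $r_n=K_kn^{-s/(2s+1)}$ on $[N_k,N_{k+1})$. Since $S_\eps^s(b,B,J_0)\subset S_\eps^s(1/k,k,k)$ for all $k$ large (depending on $(b,B,J_0)$), the resulting fixed $r_n$ then satisfies \eqref{eq: Adap} for every $(b,B,J_0)$. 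With this routine addition your proof is complete and in fact more carefully argued than the published version.
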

\begin{proof}
Assume that there exists a honest confidence set $C_n$ satisfying $\eqref{eq: cor1}$ and $\eqref{eq: cor2}$.
Then take any $s\in (2s_{\min},s_{\max})$ and choose the parameters $r,r'$ such that they satisfy $s/2>r'>r>\max\{(1-\eps)s,s_{\min}\}$. Following from Theorem~\ref{Thm: InfLB} if  assertion $\eqref{eq: cor1}$ holds then $\eqref{eq: Adap}$ can not be true, i.e., the size of the confidence set for any $f\in S^{s}_{\eps}(b,B,J_0)$ can not be of a smaller order than $n^{-r'/(2r'+1/2)}$. However, since $r'<s/2$ we have {$n^{-s/(2s+1)}=o( n^{-r'/(2r'+1/2)})$}. Hence the size of the honest confidence set has to be of a polynomially larger order than $n^{-s/(2s+1)}$, which contradicts $\eqref{eq: cor2}$.
\end{proof}

\begin{remark} \normalfont
In Theorem~\ref{thm: Conf} we have proved that for $\eps(s)\leq ms/(2s+1/2)$ (with $s\in [s_{\min},s_{\max}]$ and $m$ arbitrary close to $1$) the construction {of adaptive} and honest confidence sets is possible. The upper bound tends to $1/2$ as $s$ goes to infinity and $m$ to one, showing that the restriction $\eps > 1/2$ in Corollary \ref{Cor: NoAdap} cannot be weakened in general.
\end{remark}

\section{Proof of Theorem~\ref{thm: Conf}}\label{sec: ProofConf}
As a first step in the proof we investigate the estimator of the optimal resolution level ${\hat{j}_n}$ balancing out the bias and variance term{s} in the estimation.
The linear estimator $\hat{f}_{n}(j)$ defined in $\eqref{eq: linEst}$ has bias and variance
so that\begin{equation} \label{bias}
\|\E_1\hat{f}_n(j)-f\|_2^2 \le \|f\|^2_{s,2} 2^{-2js} \equiv B(j,f)
\end{equation}
 and
\begin{equation}
\E_1\|\hat{f}_n(j)-\E_1\hat{f}_n(j)\|_2^2 = (1/n) \E_1 \sum_{k =1 }^{2^j} {g_{k}'}^2 = \frac{2^{j{+1}}}{n}.
\end{equation}

Our goal is to find an estimator which balances out these two terms. For this we used Lepski's method in $\eqref{def: barj}$. For $f \in S^s(B)$ we define
\begin{equation}\label{def: j*}
j_n^* = j_n^*(f)\equiv \min \{j \in \mathcal J: B(j,f) \le 2^{j{+1}}/n \}
\end{equation}
which implies, by monotonicity, that
\begin{equation}\label{eq: help03}
B(j, f) = 2^{-2js} \|f\|_{s,2}^2 \le \frac{2^{j{+1}}}{n}, ~~\forall j \ge j_n^*,~ j\in\mathcal{J},
\end{equation}
\begin{equation*}
B(j,f) = 2^{-2js} \|f\|^2_{s,2} > \frac{2^{j{+1}}}{n},~~\forall j < j_n^*,~ j\in\mathcal{J}.
\end{equation*}
We note that for $n$ large enough (depending only on $b$ and $B$) the inequalities $j_n^*<\lfloor\log_{{2}} n\rfloor$ and $j_n^*>\lceil (\log_{{2}} n)/(2s'+1)\rceil$ hold {(recall that $s'>s_{\max}$ is an arbitrary parameter defined below display $\eqref{eq: linEst}$)}, hence we also have
\begin{align}
2^{2s+1}2^{-2j_n^*s}\|f\|_{s,2}^2\geq \frac{2^{j_n^*{+1}}}{n}.\label{eq: help02}
\end{align}
Therefore we can represent $j_n^*$ and the given value of $s$ as
\begin{align}
j_n^*=\frac{\log_2 n+2(\log_2 (\|f\|_{s,2})+c_n)}{2s+1},~~\text{and}\label{eq: jn*}
\end{align}
\begin{equation} \label{sid}
s = \frac{\log_2 n}{2j_n^*} + \frac{\log_2 (\|f\|_{s,2})+c_n}{j_n^*} - \frac{1}{2},
\end{equation}
respectively, where {$c_n\in[-1/2,s]\subset [-1/2,s_{\max}]$}.

\medskip

The next lemma shows that ${\hat{j}_n}$ is a good estimator for the optimal resolution level $j^*_n$ in the sense that with probability approaching one it lies between $(1-\eps(s))j_n^*$ and $j_n^*$ whenever $f$ is a self-similar function in the sense of (\ref{def: selfsim}).

\begin{lemma}\label{lem: EstRes}
 Assume that $f \in S^s(B)$ for some $s \in [s_{\min}, s_{\max}]$ and any $B>0$.\\
a) We have for all $n \in \mathbb N$, $${\Pr}_{{1}}({\hat{j}_n} \geq j_n^*) \le C\exp\{- 2^{j_n^*}/8\},$$
with $C=2/(1-e^{-1/{8}})^2$.\\
b) Furthermore, if the self-similarity condition $\eqref{def: selfsim}$ holds we also have for all $n \in \mathbb N$ such that $j_n^*\geq J_0$ that
$${\Pr}_{{1}}\big({\hat{j}_n} < j_n^*(1-\eps(s))\big) \leq j_n^*\exp\{-(9/{8})2^{j_n^*}\}.$$
\end{lemma}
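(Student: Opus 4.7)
For both parts my plan is the same: unfold the event via the defining rule $\eqref{def: barj}$ of $\hat j_n$; recognise each increment $\|\hat f_n(j)-\hat f_n(l)\|_2^2$ as a scaled non-central chi-squared variable $(2/n)\chi^2_{d}(\lambda)$, since the coordinates $y_k'$ are independent $N(f_k,2/n)$; and then apply the two-sided Laurent--Massart inequalities for $\chi^2_d(\lambda)$. The bias identity $\eqref{eq: help03}$, its reverse counterpart $\eqref{eq: help02}$, and the self-similarity lower bound $\eqref{def: selfsim}$ feed the mean and non-centrality parameters in each case.

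For part (a), $j_n^*\in\mathcal J$ for $n$ large, so $\{\hat j_n > j_n^*\}$ lies inside
\[
\bigcup_{l\in\mathcal J,\, l>j_n^*}\bigl\{\|\hat f_n(j_n^*)-\hat f_n(l)\|_2^2 > 4\cdot 2^{l+1}/n\bigr\}.
\]
On each summand, the non-centrality satisfies $\lambda_l=(n/2)\sum_{k=2^{j_n^*}+1}^{2^l}f_k^2\le 2^{j_n^*}$ by $\eqref{eq: help03}$, and $d_l=2^l-2^{j_n^*}\le 2^l$, so $d_l+\lambda_l$ sits comfortably below the chi-squared threshold $4\cdot 2^l$. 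The upper-tail Laurent--Massart inequality then bounds each probability by a quantity of order $\exp(-c\cdot 2^l)$, and a geometric sum over $l>j_n^*$ produces the stated constant $C=2/(1-e^{-1/8})^2$ and exponent $2^{j_n^*}/8$.

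For part (b), if $\hat j_n < j_n^*(1-\epsilon(s))$ then some $j\in\mathcal J$ with $j\le j_n^*(1-\epsilon(s))$ satisfies the Lepski condition for every $l>j$; specialising to $l=j_n^*$ (in $\mathcal J$ for large $n$) gives $\|\hat f_n(j)-\hat f_n(j_n^*)\|_2^2\le 8\cdot 2^{j_n^*}/n$. Combining $\eqref{def: selfsim}$ at $J=j_n^*\ge J_0$ with $\eqref{eq: help02}$ and the calibration $c(s)=16\cdot 2^{2s+1}$ yields
\[
\sum_{k=2^j+1}^{2^{j_n^*}}f_k^2\ \ge\ c(s)\,\|f\|_{s,2}^2\,2^{-2j_n^*s}\ \ge\ 32\cdot 2^{j_n^*}/n,
\]
so that the representation $(2/n)\chi_d^2(\lambda)$ applies with $d\le 2^{j_n^*}$ and $\lambda\ge 16\cdot 2^{j_n^*}$. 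The lower-tail Laurent--Massart inequality, used with the maximal admissible $x=(d+\lambda-4\cdot 2^{j_n^*})^2/(4(d+2\lambda))$, is minimised over $d\in[0,2^{j_n^*}]$ and $\lambda\ge 16\cdot 2^{j_n^*}$ at the boundary $d=0$, $\lambda=16\cdot 2^{j_n^*}$, producing exactly $x=(9/8)\cdot 2^{j_n^*}$. A union bound over the at most $j_n^*$ admissible values of $j$ then gives the stated estimate.

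The main obstacle is the careful bookkeeping of constants: in part (b) the coefficient $9/8$ is the \emph{tight} output of the boundary minimisation just described, and is made possible precisely by the calibration $c(s)=16\cdot 2^{2s+1}$ in the definition of the self-similar class. Part (a) is numerically more forgiving, the chi-squared contribution dominating once the bias is absorbed, but the geometric summation over $l$ still has to be arranged to recover the exact constant $C$.
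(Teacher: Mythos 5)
Your argument is correct and recovers both exponents, but it proceeds differently from the paper's proof. In part (a), you bound $\Pr_1(\hat j_n>j_n^*)$ by a \emph{single} union bound over $l>j_n^*$, reading off the failure of the Lepski condition at $j_n^*$ itself; the paper instead peels the event over the possible value $\hat j_n=j$, $j>j_n^*$, and for each such $j$ unions over the failing pairs $(j^-,l)$ with $j^-=j-1$, $l\ge j$, which is why a doubly geometric sum appears and the constant carries the squared factor $(1-e^{-1/8})^{-2}$. Your single sum therefore actually yields a sharper constant than the stated $C$; claiming to reproduce $C$ exactly is a small overstatement but of course harmless for an upper bound. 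The second, and more substantive, difference is the concentration step. You treat the block increment $\|\hat f_n(j)-\hat f_n(l)\|_2^2$ as a scaled non-central chi-squared $(2/n)\chi^2_d(\lambda)$ and invoke the two-sided Laurent--Massart inequalities for non-central chi-squared, feeding the mean through \eqref{eq: help03}, \eqref{eq: help02}, \eqref{def: selfsim}. The paper instead expands the increment into a central chi-squared term, a deterministic bias term, and a Gaussian cross term, and handles the first with Theorem~\ref{thm: help1} and the third with a Gaussian tail -- both already provided in the appendix. Your route is cleaner and avoids the cross-term bookkeeping, and your boundary minimisation in part (b) at $(d,\lambda)=(0,16\cdot 2^{j_n^*})$ neatly returns the constant $9/8$, a nice consistency check; the price is that you must import the non-central Laurent--Massart bound, which is not among the auxiliary results the paper states. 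Both strategies prove the lemma; yours is shorter but presupposes a stronger off-the-shelf concentration inequality.
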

\begin{proof}
See Section~\ref{sec: EstRes}.
\end{proof}
We note that by definition $j_n^*\geq\log_2 n/(2s'+1)\rightarrow\infty$ hence for $n$ large enough $j_n^*\geq J_0$ holds uniformly over $f\in\mathbb{S}(\eps,b,B,J_0)$.

As a next step we examine the new (under-smoothed) estimator of the resolution level $\hat{J}_n$. Assuming $f\in S^{s}_{\eps(s)}(b,B,J_0)$, the estimate ${\hat{j}_n}$ of $j_n^*$ can be converted into an estimate of $s$. We note that a given $f$ does not necessarily belong to a unique self-similar class $S^{s}_{\eps(s)}(b,B,J_0)$, but the following results hold for any class $f$ belongs to. We estimate $s$ simply by
$$\bar s_n = \frac{\log_2 n}{2 {\hat{j}_n}}  -\frac{1}{2},$$ ignoring `lower order' terms in (\ref{sid}). We then have from (\ref{sid}) that
\begin{align*}
\bar s_n -s &= \frac{\log_2 n}{2 {\hat{j}_n}}  - \frac{1}{2} - \frac{\log_2 n}{2j^*_n} - \frac{\log_2 (\|f\|_{s,2})+c_n}{j^*_n} +\frac{1}{2}\\
 &= \frac{\log_2 n}{2}\left(\frac{j^*_n-{\hat{j}_n}}{j^*_n {\hat{j}_n}}\right) - \frac{\log_2 (\|f\|_{s,2})+c_n}{j^*_n}.
\end{align*}
Now choose a constant $\kappa_3 \in (\kappa_2, 1)$ so that
$$0<\frac{1+\kappa_1}{2\kappa_2}<\kappa_2<\kappa_3<1,$$
recalling $\eqref{def: kappa}$. From Lemma 1a) we have ${\Pr}_{1}({\hat{j}_n} - j^*_n < 0) \to 1$ uniformly over $f\in\cup_{s\in[s_{\min},s_{\max}]}S^s(B)$, hence from {the inequality $j_n^*\geq(\log_2 n)/(2s'+1)$} we have for some constant $C=C(B, s'), B \ge \|f\|_{s,2}$,
\begin{align*}
{\Pr}_{{1}}\left(\bar s_n \le \kappa_3 s\right) &= {\Pr}_{{1}} \left(\bar s_n - s \le (\kappa_3-1)s\right) \\
&{\leq} {\Pr}_{{1}} \left(\frac{\log_2 n}{2}\left(\frac{{\hat{j}_n}-j^*_n}{j^*_n {\hat{j}_n}}\right) + \frac{\log_2 (\|f\|_{s,2})+c_n}{j^*_n} \ge (1-\kappa_3)s_{\min} \right) \\
& \le {\Pr}_{{1}}\left(C /\log_2 n > (1-\kappa_3)s_{\min} \right) + o(1) {=o(1).}
\end{align*}
On the other hand we also have from Lemma 1b), $\eqref{eq: jn*}$, and $0<\eps(s)\leq1$ that
\begin{align*}
&{\Pr}_{{1}}(\bar s_n \ge (1+\kappa_1)s) \\
&= {\Pr}_{{1}}(\bar s_n- s \ge \kappa_1 s ) \\
&\le {\Pr}_{{1}} \left( \frac{\log_2 n}{2}\left(\frac{j^*_n-{\hat{j}_n}}{j^*_n {\hat{j}_n}}\right) - \frac{{\log_2 (\|f\|_{s,2})+c_n}}{j^*_n} \ge \kappa_1 s  \right) \\
& \le {\Pr}_{{1}}  \left( \frac{j^*_n-{\hat{j}_n}}{j^*_n {\hat{j}_n}} \ge \frac{2\kappa_1 s}{\log_2 n}+\frac{{2(\log_2 (\|f\|_{s,2})+c_n)}}{j_n^*\log_2 n}  \right) \\
& \le {\Pr}_{{1}} \left(\eps(s) j^*_n >  \frac{2\kappa_1 s(1-\eps(s))(j_n^*)^2}{\log_2 n}+\frac{2(1-\eps(s))j_n^*(\log_2 (\|f\|_{s,2})+c_n)}{\log_2 n}  \right)+o(1)\\
&= {\Pr}_{{1}} \Big(\eps(s) >  \frac{2\kappa_1 s(1-\eps(s))}{2s+1}
+\frac{2\kappa_1 s(1-\eps(s))}{2s+1}\times\\
&\qquad\frac{2(\log_2(\|f\|_{s,2})+c_n)}{\log_2 n}+\frac{{2(1-\eps(s))(\log_2 (\|f\|_{s,2})+c_n)}}{\log_2 n}  \Big){+o(1)}\\
&\le {\Pr}_{{1}} \left(\eps(s) >  \frac{2\kappa_1 s(1-\eps(s))}{2s+1}+{\frac{2(\kappa_1+1)s+1}{2s+1}\times}\frac{2\log_2(b{/2})\wedge0}{\log_2 n}\right)+o(1)\\
&= {\Pr}_{{1}} \left(\eps(s) >  \frac{\kappa_1 s}{(1+\kappa_1)s+1/2}+\frac{2\log_2(b{/2})\wedge 0}{\log_2 n}\right)+o(1).
\end{align*}
The probability on the right hand side tends to zero for $n$ large enough (depending only on $b$), since
\begin{align*}
\eps(s)\leq m \frac{s}{2s+1/2}< \frac{\kappa_1(2s_{\min}+1/2)}{(1+\kappa_1)s_{\min}+1/2}\times\frac{s}{2s+1/2}\leq \frac{\kappa_1 s}{(1+\kappa_1)s+1/2}
\end{align*}
following from the definition of $\kappa_1$ given in $\eqref{def: kappa}$ and the monotone increasing property of the function $g(s)=(2s+1/2)/[(1+\kappa_1)s+1/2]$. Therefore we see that on an event of probability approaching one we have
\begin{align}
\bar s_n \in \left(\kappa_3 s, (1+\kappa_1)s\right),\label{eq: SmoothEst}
\end{align}
 and hence if we define $$\hat s_n = \bar s_n/(2\kappa_2)$$ we see
\begin{align}
{\Pr}_{1}\left(\hat s_n \in \left(\frac{\kappa_3}{2\kappa_2}s,\frac{1+\kappa_1}{2\kappa_2}s\right)\right) \to 1\label{eq: EstSmooth}
\end{align}
 as $n \to \infty$.  By choice of the $\kappa_i$'s we see that $\hat s_n$ systematically ``underestimates'' the smoothness $s$ and is contained in a closed subinterval of $(s/2,s)$ with probability approaching one. 
The `resolution level' $J$ corresponding to $\hat s_n$ is $\hat{J}_n$:  Easy (but somewhat cumbersome) algebraic manipulations imply
 \begin{align}
2n^{1/(2\hat{s}_n+1/2)}> 2^{\hat{J}_n}\geq n^{1/(2\hat{s}_n+1/2)}\label{def: hatJ2}
 \end{align}
(where $\hat{J}_n$ was defined in $\eqref{def: hatJ}$). Furthermore we note that from $\eqref{def: hatJ}$ {and $\hat{j}_n\in\mathcal{J}$} also follows
\begin{align}
\hat{J_n}\in\left [\frac{2\kappa_2}{2s'+\kappa_2}\log_2 n, {\lceil 2\log_2 n\rceil} \right].\label{eq: hatJ}
\end{align}
Next we turn our attention to the analysis of the confidence set $C_n$ given in $\eqref{def: Cn}$.
First of all note that
\begin{align}
\tilde{U}_n(\hat f_n)-\E_2 \tilde{U}_n(\hat f_n)&=\frac{1}{n}\sum_{k\leq 2^{\hat{J}_n}}{\big((g_k'')^2-2\big)}+\frac{2}{\sqrt{n}}\sum_{k\leq 2^{\hat{J}_n}}(f_k-\hat{f}_{n,k}){g_k''}\nonumber\\
&\equiv -A_n-{A'}_n. \label{eq: RiskEst}
\end{align}
We deal with the two random sums $A_n$ and ${A'}_n$ on the right hand side separately. First we show that ${A'}_n=O_{{\Pr}_f}(n^{-\frac{2s+1/2}{2s+1}})$. Note that conditionally on the first sample the random variable ${A'}_n$ has Gaussian distribution with mean zero and variance ${(8/n)}\sum_{k\leq \hat{J}_n}(f_k-\hat{f}_{n,k})^2\leq {(8/n)}\|f-\hat{f}_n\|_2^2$ . Furthermore note that $\|f-\hat{f}_n\|_2^2={O_{\Pr_1}}(n^{-\frac{2s}{2s+1}})$ following from the adaptive construction of the estimator $\hat{f}_n$. Hence we can conclude following from the independence of the samples {$y'$ and $y''$}, {and Chebyshev's inequality} that for every $\delta>0$ there exists a large enough constant $K$ such that ${A'}_n\geq Kn^{-\frac{2s+1/2}{2s+1}}$ with ${\Pr}_f$-probability less than $\delta$.

It remains to deal with $A_n$. In view of sample splitting the centered variables $({2-(g_k'')^2})$ are independent of $\hat{J}_n$, have variance $\sigma^2=8$ and finite skewness $\rho>0$. From the law of total probability, $\eqref{eq: SmoothEst}$, $\eqref{eq: hatJ}$ and Berry-Esseen's theorem (Theorem (4.9) in \cite{Durrett}) we deduce that
\begin{align}
\Big|{\Pr}_f &\Big(A_n\leq \frac{\sigma\gamma_{\a}2^{\hat{J}_n/2}}{n}\Big)-(1-\a)\Big|=\nonumber\\
&\leq\sum_{j=2\kappa_2\log_2 n/(2s'+\kappa_2)}^{{\lceil 2\log_2 n\rceil}}\Big|{\Pr}_{{2}} \Big(\frac{1}{\sigma2^{j/2}}\sum_{k=1}^{2^j}{(2-(g_k'')^2)}\leq \gamma_{\a}\Big)-(1-\a)\Big|{\Pr}_{1}(\hat{J}_n=j)\nonumber\\
&\leq {(3\rho/\sigma^3)2^{-\kappa_2\log_2 n/(2s'+\kappa_2)}}=o(1).\label{eq: quantile}
\end{align}
Next note that in view of $f \in S^s(B)$ and Theorem \ref{thm: AdaptiveEst} (using that $\|\hat{f}_n\|_{s,2}$ is uniformly bounded for $f\in S^s(B)$) the bias satisfies that
\begin{align}
{2^{2\hat{J}_n\hat{s}_n}\|K_{{\hat{J}_n}}(f-\hat f_n)-(f-\hat f_n)\|_2^2 =O\Big(2^{2\hat{J}_n\hat{s}_n-2\hat{J}_ns}(\|f\|_{s,2}^2+\|\hat f_n\|_{s,2}^2 )\Big) = o_{\Pr_1}(1)},\label{eq: UBbias}
\end{align}
since $s >[(\kappa_1+1)/(2\kappa_2)]s> \hat s_n$ {with $\Pr_1$-probability tending to 1}. Furthermore following from $\eqref{def: hatJ2}$ we have {$2^{2\hat{J}_n \hat s_n}\geq n2^{-\hat{J}_n/2}$}.
Then by using Pythagoras' theorem, $\eqref{eq: UBbias}$ and  $\eqref{eq: RiskEst}$ we deduce
\begin{align}
{n2^{-\hat{J}_n/2}}\|f-\hat{f}_n\|_2^2&={n2^{-\hat{J}_n/2}}\Big(\|K_{\hat{J}_n}(f-\hat{f}_n)\|_2^2+\|K_{\hat{J}_n}(f-\hat f_n)-(f-\hat f_n)\|_2^2\Big)\nonumber\\
&= {n2^{-\hat{J}_n/2}}\E_2 \tilde{U}_n(\hat f_n)+ {o_{\Pr_1}(1)}\nonumber\\
&= {n2^{-\hat{J}_n/2}}\Big(\tilde{U}_n(\hat f_n)+A_n+{A'}_n\Big)+{o_{\Pr_1}(1)}.\label{eq: help4}
\end{align}
Following from $\eqref{eq: EstSmooth}$ and $\eqref{def: hatJ2}$ we obtain that (uniformly over $\mathbb{S}(\eps,b,B,J_0)$) with $\Pr_{1}$-probability tending to one
$$2^{\hat{J}_n/2}/n\gtrsim n^{-\frac{s(1+\kappa_1)/\kappa_2 }{s(1+\kappa_1)/(\kappa_2)+1/2}},$$
where the right hand side is of larger order than $n^{-\frac{2s}{2s+1/2}}$ by the definition of $\kappa_1$ and $\kappa_2$. Furthermore following from ${A'}_n=O_{\Pr_f}(n^{-\frac{2s+1/2}{2s+1}})$ and $n^{-\frac{2s+1/2}{2s+1}}=o( n^{-\frac{2s}{2s+1/2}})$ we see that the right hand side of $\eqref{eq: help4}$ can be rewritten as
\begin{align}
{n2^{-\hat{J}_n/2}}\tilde{U}_n(\hat f_n)+A_n+ {o_{{\Pr}_f}(1)}\label{eq: help5}.
\end{align}
Therefore following from $\eqref{eq: help4}$, $\eqref{eq: help5}$ and $\eqref{eq: quantile}$ we deduce that the confidence set $C_n$ given in $\eqref{def: Cn}$ has exact asymptotic coverage $1-\a$
\begin{align*}
{\Pr}_f(f\in C_n)&={\Pr}_f\Big({n2^{-\hat{J}_n/2}}\|f-\hat{f}_n\|_2^2\leq {n2^{-\hat{J}_n/2}}\tilde{U}_n(\hat f_n)+{\sqrt{8}\gamma_\alpha}\Big)\\
&={\Pr}_f\Big({n2^{-\hat{J}_n/2}}A_n\leq {\sqrt{8}\gamma_\alpha+o_{{\Pr}_f}(1)}\Big)= 1-\alpha+o(1).
\end{align*}

Finally we show that the radius of the confidence set is rate adaptive.
First we note that
$$ 2^{\hat{J}_n/4}/\sqrt n \leq 2^{1/4} n^{-\hat s_n/(2\hat s_n+1/2)} = o_{{{\Pr}_1}}(n^{-s/(2s+1)}),$$
following from ${s>}\hat s_n >s\kappa_3/(2\kappa_2)> s/2$ {with $\Pr_1$-probability tending to 1}  and $\eqref{def: hatJ2}$.
Then following from $\eqref{eq: ERadius}$ and Theorem~\ref{thm: AdaptiveEst} we conclude
$$\E_{f}\tilde{U}_n(\hat{f}_n)=\E_1\|K_{\hat{J_n}}{(f-\hat{f}_n)}\|_2^2\leq \E_1\|f-\hat{f}_n\|_2^2\leq K(s)B^{1/(1+2s)}n^{-s/(1+2s)},$$ so that the second claim of Theorem \ref{thm: Conf} follows from Markov's inequality.

\subsection{Proof of Lemma~\ref{lem: EstRes}}\label{sec: EstRes}
a) Pick any $j \in \mathcal{J}$ so that $j> j_n^*$ and denote by $j^-=j-1 \ge j_n^*$ the previous element in the grid. One has, by definition of ${\hat{j}_n}$,
\begin{equation} \label{pr}
{\Pr}_{1} ({\hat{j}_n}= j)  \leq  \sum_{l \in \mathcal{J}: l \ge j} {\Pr}_{1} \left ( \left\|\hat{f}_n(j^-) - \hat{f}_n(l) \right\|^2_2 > 4\times \frac{2^{l{+1}}}{n}\right),
\end{equation}
and we observe that
\begin{equation*}
\left\|\hat{f}_n(j^-) - \hat{f}_n(l) \right \|^2_2 = \frac{1}{n}\sum_{k=2^{j^-}+1}^{2^l}{g_k'^2}+\sum_{k=2^{j^-}+1}^{2^l}f_k^2-\frac{2}{\sqrt{n}}\sum_{k=2^{j^-}+1}^{2^l}f_k {g_k'}.
\end{equation*}
Since $f\in S^{s}(B)$ and $l\geq j^-\geq j_n^{*}$ we have
\begin{align}
\sum_{k=2^{j^-}+1}^{2^l}f_k^2\leq \|f\|_{s,2}^2 2^{-2sj^{-}}=B(j^{-},f)\leq \frac{2^{j^-{+1}}}{n}\leq\frac{2^{l{+1}}}{n}.\label{eq: help01}
\end{align}
Therefore each probability in $\eqref{pr}$ are bounded from above by the sum of the following probabilities
\begin{align}
{\Pr}_{1}\Big(\frac{1}{n}\sum_{k=2^{j^-}+1}^{2^l}{g_{k}'}^2\geq 2\times\frac{2^{l{+1}}}{n}\Big)\leq {\Pr}_{1}\Big(\sum_{k=2^{j^-}+1}^{2^l}({g_{k}'}^2-2)\geq 2^{l{+1}}\Big)\label{eq: probChi}
\end{align}
and
\begin{align}
{\Pr}_{{1}}\Big(\Big|\frac{2}{\sqrt{n}}\sum_{k=2^{j^-}+1}^{2^l}f_k {g_k'}\Big|\geq \frac{2^{l{+1}}}{n}  \Big)\leq {\Pr}_{{1}}\Big(|Z|\geq \frac{2^l}{\sqrt{n}}\Big),\label{eq: probNorm}
\end{align}
where $Z$ is a Gaussian distributed random variable with mean zero and variance {$2{\sum_{k=2^{j^-}+1}^{2^l}} f_k^2\leq 2^{l+2}/n$} following from $\eqref{eq: help01}$.
Then by Theorem \ref{thm: help1} (with $t=2^{l{+1}}$, ${\sigma^2=2}$, and $n=2^l-2^{j^{-}}$) the right hand side of $\eqref{eq: probChi}$ is bounded from above by
$$\exp\Big\{-\frac{2^{{2l+2}}/4}{4(2^{l+1}/2+2^l-2^{j^{-}})}\Big\}\leq \exp\Big\{-\frac{2^{{2l}}}{4(2^l+2^l)}\Big\}\leq e^{-2^l/{8}}.$$
Furthermore by a standard Gaussian tail bound the probability in $\eqref{eq: probNorm}$ is bounded by
$$ \frac{2}{\sqrt{2\pi}2^{l/2-1}}\exp\{ -2^{l-2}\}\leq e^{-2^l/{4}}.$$
We thus obtain that
\begin{align*}
{\Pr}_{1} ({\hat{j}_n}= j)\le \sum_{l=j}^{\log_2 n}2e^{-2^l/{8}}\leq \frac{2}{1-e^{-1/{8}}}e^{-2^j/{8}},\,j\geq j_n^*,\\
{\Pr}_{1} ({\hat{j}_n}\geq j^*_n)\leq \sum_{j=j_n^*}^{\log_2 n}{\Pr}_{1} ({\hat{j}_n}= j)\leq \frac{2}{(1-e^{-1/{8}})^2}e^{-2^{j_n^*}/{8}}.
\end{align*}

\medskip

{For Part b), fix $j \in \mathcal J$ such that $j<j_n^*(1-\eps)$, where $\eps=\eps(s)$. Then by definition of ${\hat{j}_n}$}
\begin{align}
{\Pr}_{1} ( {\hat{j}_n} =j) \le {\Pr}_{{1}}(\|\hat{f}_n(j)-\hat{f}_n(j_n^*)\|_2 \le 2 \sqrt{2^{j_n^*{+1}}/n}).\label{eq: help04}
\end{align}
Now, using the triangle inequality
\begin{align*}
\|\hat{f}_n(j)-\hat{f}_n(j_n^*)\|_2 & = \|\hat{f}_n(j)-\hat{f}_n(j_n^*)-\E_1(\hat{f}_n(j)-\hat{f}_n(j_n^*)) + \E_1(\hat{f}_n(j)-\hat{f}_n(j_n^*))\|_2 \\
& \ge \|\E_1(\hat{f}_n(j)-\hat{f}_n(j_n^*))\|_2 - \|\hat{f}_n(j)-\hat{f}_n(j_n^*)-\E_1(\hat{f}_n(j)-\hat{f}_n(j_n^*))\|_2 \\
& = \sqrt{\sum_{k=2^j+1}^{2^{j_n^*}} f_k^2} - \frac{1}{\sqrt n}\sqrt{\sum_{k=2^j+1}^{2^{j_n^*}} {g_k'}^2}.
\end{align*}
Since $j<j_n^*(1-\eps)$ we have from the definition of self-similarity $\eqref{def: selfsim}$ and $\eqref{eq: help02}$ that
$$\sqrt{\sum_{k=2^j+1}^{2^{j_n^*}} f_k^2} \ge \sqrt{\sum_{k=2^{j_n^*(1-\eps)}}^{2^{j_n^*}} f_k^2} \ge 4 \times 2^{s+1/2} \|f\|_{s,2} 2^{-j_n^*s} \ge 4 \times \sqrt{\frac{2^{j_n^*{+1}}}{n}}, $$
so that the probability on the right hand side of $\eqref{eq: help04}$ is less than or equal to
\begin{align*}
& {\Pr}_{1} \left(\frac{1}{\sqrt n} \sqrt{\sum_{k=2^{j}+1}^{2^{j^*_n}} {g_{k}'}^2}  \ge \sqrt{\sum_{k=2^j+1}^{2^{j_n^*}} f_{k}^2} - 2\sqrt{\frac{2^{j_n^*{+1}}}{n}}    \right)  \\
& \le  {\Pr}_{1} \left(\sum_{k=1}^{2^{j_n^*}} {g_{k}'}^2  >  (4 -2)^2 2^{j_n^*{+1}}    \right) \\
& =  {\Pr}_{1} \left(\sum_{k=1}^{2^{j_n^*}} ({g_{k}'^2-2})  >  3\times 2^{j_n^*{+1}} \right).
\end{align*}
This probability on the right hand side is bounded by $\exp\{-(9/{8})2^{j_n^*}\}$ following from Theorem \ref{thm: help1} (with $t=3\times 2^{j_n^*{+1}}, \sigma^2=2$ and $n=2^{j_n^*}$). The overall result follows by summing the above bound in $j<(1-\eps)j_n^*<j_n^*$.
\bigskip

\section{Proof of Theorem \ref{Thm: InfLB}}\label{sec: ProofInfLB}
The proof of the theorem adapts ideas from the proof of Theorem 4 of \cite{BullNickl}. {In this section we use the notation ${\Pr}_f^{(n)}$ and $\E_f^{(n)}$ introduced in Section \ref{sec: main} for the distribution and expected value of $y$, defined in  $\eqref{model}$, respectively (there is no sample splitting in this case as in Section \ref{sec: construction}). As a special case we note that ${\Pr}_0^{(n)}$ and $\E_0^{(n)}$ denotes the distribution and expected value of $y_k=g_k/\sqrt{n},\,k\in\mathbb{N}$, respectively.}

Let us assume that such a confidence set $C_n$ exists and derive a contradiction with the help of a particularly constructed sequence $(f_m: m \in \mathbb N)$ of $s$-self-similar functions. We denote the limit of these sequences by $f_{\infty}$, which will also be shown to be {$r$-}self-similar. Then we show that along a subsequence $n_m$ of $n$, and for $\delta = (1-2\alpha)/5>0$,
\begin{align}
\sup_{m}\P_{f_{\infty}}^{(n_m)}\big(f_{\infty}\in C_{n_m}\big)\leq 1-\a-\delta\label{eq: Contradiction}
\end{align}
contradicting $\eqref{eq: Hon}$.

\smallskip

We partition $\mathbb{N}$ into sets of the form $Z_{i}^0=\{2^{i},2^{i}+1,...,2^{i}+2^{i-1}-1\}$ and $Z_{i}^1=\{2^{i}+2^{i-1},2^{i}+2^{i-1}+1,...,2^{i+1}-1\}$.
Let us choose a parameter $s'>s$ satisfying $r>s'(1-\eps)>s(1-\eps)$ and define self-similar sequences $f_m=(f_{m,k})$, for $m\in\mathbb{N}$,
$$
f_{m,k}=\begin{cases}
2^{-(s'+1/2)l} & \text{for $l\in\mathbb{N} \cup \{0\}$ and $k\in Z_l^0$,}\\
2^{-(r'+1/2)j_i}\beta_{j_i,k} & \text{for $i\leq m$ and $k\in Z_{j_i}^1$,}\\
0 & \text{else,}
\end{cases}
$$
for some {monotone increasing sequence $j_i\in\mathbb{N}$} tending to infinity and coefficients $\beta_{j_i,k}=\pm 1$ to be defined later. First we show that independently of the choice of the {monotone increasing} sequence $j_i$ and of the coefficients $\beta_{j_i,k}=\pm 1$, the signals $f_{m}$ and $f_{\infty}$ satisfy the self-similarity condition.

Using the definition of $f_m$, the monotone decreasing property of the function $f(x)=x^{-1-2(s'-s)}$and the inequality $s>r'$ one can see that
\begin{align}
\|f_m\|_{s,2}^2&=\sum_{k=1}^{\infty}f_{m,k}^2k^{2s} \le 2^{2s'+1}\sum_{k=1}^{\infty}k^{-1-2(s'-s)}+ 2^{2s}\sum_{i=1}^{m}\sum_{k\in Z_{j_i}^1} 2^{j_i(2s-2r'-1)}\nonumber\\
&\leq 2^{2s'+1}(1+\int_1^{\infty}x^{-1-2(s'-s)}dx)+2^{2s-1}\sum_{i=1}^{m}2^{j_i(2s-2r')}\nonumber\\
&\leq 2^{2s'+1}(1+\frac{1}{2(s'-s)})+2^{2s-1}\frac{2^{j_m(2s-2r')}}{1-2^{-(2s-2r')}}\equiv B(s,s',r',j_m),\label{eq: help2}
\end{align}
{where the} constant $B(s,s',r',j_m)$ {depends} only on $s,s',r'$ and $j_m$. Furthermore
\begin{align}
\sum_{{k=2^{(1-\eps)J}}}^{2^J}f_{m,k}^2\geq \sum_{k\in Z_{{\lceil(1-\eps)J\rceil}}^0}f_{m,k}^2= 2^{-(2s'+1){\lceil(1-\eps)J\rceil}}\times 2^{{\lceil(1-\eps)J\rceil}-1}=  2^{-2s'{\lceil(1-\eps)J\rceil}}/2,\label{eq: selfsimf_m}
\end{align}
{for  $J\geq\lceil (1-\eps)J\rceil+1$, which holds for $J\geq J_0$ (where $J_0$ depends only on $\eps$).}
Then following from the upper bound on the norm $\eqref{eq: help2}$ and the inequalities $s'(1-\eps)<r<s$ the right hand side of $\eqref{eq: selfsimf_m}$ is further bounded from below by
$$2^{-2{r}J }/2\geq 16\times2^{2s+1}B(s,s',r',j_m) 2^{-2s J}\geq 16\times2^{2s+1}\|f_m\|_{s,2}^2 2^{-2s J},$$
for $J>J_0$ (where $J_0$ depends on $s,s',r,{r',}\eps$ and $j_m$). [{We} note that the dependence of $J_0$ on $j_m$ is harmless since {$n_m$ is defined independently of $j_m$, see below.} ] Finally the lower bound on the Sobolev norm can be obtained via
\begin{align}
\|f_m\|_{s,2}^2\geq \sum_{k\in Z_1^0}f_{m,k}^2k^{{2s}}=2^{-1-2(s'-s)}>2^{-1-2(s'-r)}\equiv b^2.\label{eq: selfsimLB}
\end{align}

Next we show that $f_\infty$ is $r$-self-similar. First we note that the existence of $f_{\infty}$ follows from the Cauchy property of the sequence $(f_m)$ in $\ell^2$. Furthermore by definition we have that $f_{\infty,k}=f_{m,k}$ for all $k\leq 2^{j_m}, m \in \mathbb N$. Therefore similarly to $\eqref{eq: selfsimLB}$ and $\eqref{eq: help2}$ the signal $f_{\infty}$ satisfies $\|f_{\infty}\|_{r,2}\geq b$ and
\begin{align}
\|f_{\infty}\|_{r,2}^2&=\sum_{k=1}^{\infty}f_{\infty,k}^2k^{2r}\leq 2^{2r'+1}\sum_{k=1}^{\infty} k^{-1-2(r'-r)}\nonumber\\
&\leq 2^{2r'+1}(1+\frac{1}{2(r'-r)})\equiv B(r,r'),\label{eq: help3}
\end{align}
hence it belongs to the Sobolev ball $S^{r}(B)$ with radius $B=B(r,r')$ depending only on $r$ and $r'$. Then similarly to $\eqref{eq: selfsimf_m}$ we deduce from $\eqref{eq: help3}$ and the inequality $(1-\eps)s'<r$ that
\begin{align*}
\sum_{k=2^{{(1-\eps)J}}}^{2^J}f_{\infty,k}^2&\geq 2^{-2s'{\lceil(1-\eps)J\rceil}}/2 \geq 16\times2^{2r+1}B(r,r') 2^{-2r J} \\
& \geq 16\times2^{2r+1}\|f_{\infty}\|_{r,2}^2 2^{-2r J},
\end{align*}
for $J>J_0$ (where $J_0$ depends only on {$r,r',s'$ and $\eps$}).

Next we give a recursive algorithm for the choice of the sequence $j_m$ and the parameters $(\beta_{j_i,k}:\, k\in Z_{j_i}^1)$. We start the sequence with $j_0=1$ and $n_0=1$. If we assume that for $0\leq i\leq m-1$ the parameters $j_i$ and $(\beta_{j_i,k}:\, k\in Z_{j_i}^1)$ are already chosen, then for $n_m$ large enough (depending only on {$j_{m-1}$ (through $f_{m-1}$)}, $\delta$ and not on {$j_m$}) we have from $\eqref{eq: Hon}$ and $\eqref{eq: Adap}$ that
\begin{align}
\P_{f_{m-1}}^{(n_m)}(f_{m-1}\notin C_{n_m})\leq\a+\delta,\label{eq: coverage}\\
\P_{f_{m-1}}^{(n_m)}(|C_{n_m}|\geq { r_{n_m}})\leq\delta\label{eq: size},
\end{align}
{with $f_{m-1}\in S^{s}_{\eps}(b,B,J_0)$,} where ${b},J_0$ and $B$ depend only on $s,s',r,r',j_{m-1},\eps$ and are independent of $n_m$. Then we choose $j_m$ such that
\begin{align}
n_m=c2^{j_{m}(2r'+1/2)},\label{def: jm}
\end{align}
with a small enough constant $c$ satisfying 
\begin{align}
e^{c^2/2}\leq 1+{\delta^2}.\label{def: c}
\end{align} 
We note furthermore that $n_m$ has to be chosen large enough such that $j_m/j_{m-1}$ is at least $1+1/(2r')$.

Next we define the coefficients $\{\beta_{j_m,k}:\,k\in Z_{j_m}^1\}$. Let the $k$th coefficient of the sequence $f_{m,\beta}$ be
$$f_{m,\beta,k} = f_{m-1,k} + \beta_{j_m,k}{2^{-(r'+1/2)j_m}} 1_{\{k \in Z_{j_m}^1\}},~~k \in \mathbb N,$$ denote the sequence derived from the sequence $f_{m-1}$ by adding the coefficients $\{\beta_{j_m,k}{2^{-(r'+1/2)j_m}}:\,k\in Z_{j_m}^1\}$.
Then define
$$Z_{\beta}=\frac{d\Pr_{f_{m,\beta}}^{(n_m)}}{d\Pr_{f_{m-1}}^{(n_m)}}$$
and set $Z=2^{-2^{j_m-1}}\sum_{\beta}Z_{\beta}$, {so $\E_{f_{m-1}}^{(n_m)}(Z)=1$}. Let us introduce the notation $$\gamma_{n_m}={n_m}2^{-(2r'+1)j_m}.$$
Then from Proposition \ref{prop: help1} we have that
\begin{align*}
Z&={ 2^{-2^{j_m-1}} \sum_{\beta} \frac{d\Pr_{f_{m,\beta}}^{(n_m)}/d\Pr_{0}^{(n_m)}}{d\Pr_{f_{m-1}}^{(n_m)}/d\Pr_{0}^{(n_m)}}}\\
&= 2^{-2^{j_m-1}}\sum_{\beta}\exp\{{n_m}\sum_{k=1}^{\infty}(f_{m,\beta,k}-f_{m-1,k}){y_{k}} +{n_m}(\|f_{m,\beta}\|_2^2-\|f_{m-1}\|_2^2
)/2 \}\\
&=2^{-2^{j_m-1}}\sum_{\beta}\exp\{\sum_{k\in Z_{j_{m}}^1}({\sqrt{n_m}}\beta_{j_m,k}\sqrt{\gamma_{n_m}}{y_{k}})-2^{j_m-1}(\gamma_{n_m}/2)\}.\\
&=2^{-2^{j_m-1}}\sum_{\beta}\prod_{k\in Z_{j_{m}}^1}\exp\{{\sqrt{n_m}}\beta_{j_m,k}\sqrt{\gamma_{n_m}}{y_{k}}-\gamma_{n_m}/2\}.
\end{align*}
By applying Fubini's theorem, Proposition \ref{prop: help1}, the formula $\E_0^{(n)} e^{\sqrt{n}uy_k}=e^{u^2/2}$ and that $f_{m-1,k}=0$ for $k\in Z_{j_m}^1$ we see that
\begin{align*}
\E_{f_{m-1}}^{(n_m)}Z^2&=\E_{f_{m-1}}^{(n_m)}\Big(2^{-2^{j_m-1}}\sum_{\beta} \prod_{k\in Z_{j_{m}}^1}\exp\{{\sqrt{n_m}}\beta_{j_m,k}\sqrt{\gamma_{n_m}}{y_{k}}-\gamma_{n_m}/2\} \Big)^2\\
&= 2^{-2^{j_m}}\sum_{\beta,\beta'}\E_0^{(n_m)}\Big(\prod_{k\in Z_{j_{m}}^1}\exp\{{\sqrt{n_m}}(\beta_{j_m,k}+\beta'_{j_m,k})\sqrt{\gamma_{n_m}}{y_{k}}-\gamma_{n_m}\}\times\\
&\qquad \exp\{{n_m}\sum_{k=1}^{\infty}f_{m-1,k}{y_{k}}-{n_m}\|f_{m-1}\|_2^2/2\}\Big)\\
&=2^{-2^{j_m}}\sum_{\beta,\beta'} \prod_{k\in Z_{j_{m}}^1}\exp\{\frac{\gamma_{n_m}}{2}(\beta_{j_m,k}+\beta'_{j_m,k})^2-\gamma_{n_m} \}\\
&=2^{-2^{j_m}}\sum_{\beta,\beta'} \exp\{\gamma_{n_m}\sum_{k\in Z_{j_{m}}^1}\beta_{j_m,k}\beta'_{j_m,k}\}\\
&=\E\big(\exp\{\gamma_{n_m}Y_{j_m}\}\big),
\end{align*}
where $Y_{j_m}=\sum_{i=1}^{2^{j_m-1}}R_i$ for i.i.d. Rademacher random variables $R_i$ and $\E$ is the {corresponding expectation.}

Note that {following $\eqref{def: jm}$} {$c=n_m2^{-(2{r'}+1/2)j_m}=\gamma_{n_m} 2^{j_m/2}$} and recall the definition of the hyperbolic cosine function $\cosh(x)=(e^{x}+e^{-x})/2$. Then we deduce that
\begin{align*}
\E\big(\exp\{\gamma_{n_m}Y_{j_m}\}\big)&=\E\big(\exp\{c 2^{-j_m/2}\sum_{i=1}^{2^{j_m-1}}R_i\}\big)\\
&=\Big(\frac{e^{-c 2^{-j_m/2}}+e^{c 2^{-j_m/2}}}{2}\Big)^{2^{j_m-1}}\\
&= \cosh(c 2^{-j_m/2})^{2^{j_m-1}}\\
&=\big(1+c^2 2^{-j_m}(1+o(1))\big)^{2^{j_m-1}}\\
&\leq \exp\big\{ c^2(1/2+o(1))\big\}\\
&\leq 1+\delta^2,
\end{align*}
using the definition of $c$ given in $\eqref{def: c}$. Conclude  that therefore 
 {
\begin{align}
\E_{f_{m-1}}^{(n_m)}(Z-1)^2&=\E_{f_{m-1}}^{(n_m)}(Z-\E_{f_{m-1}}^{(n_m)}Z)^2\nonumber\\
&{=} \E_{f_{m-1}}^{(n_m)}Z^2-(\E_{f_{m-1}}^{(n_m)}Z)^2\leq 1+\delta^2-1
\leq\delta^2. \label{eq: help3_0}
\end{align}}
As a consequence of the preceding inequality if we consider the test $T_{n_m}=1\{\exists f\in C_{n_m}, \|f-f_{m-1}\|_2\geq {r_{n_m}}\}$ then by the Cauchy-Schwarz {and Jensen's} inequality
\begin{align}
&{\Pr}_{f_{m-1}}^{(n_m)}(T_{n_m}=1)+\max_{\beta}\P_{f_{m,\beta}}^{(n_m)}(T_{n_m}=0)\nonumber\\
&\quad\geq{\Pr}_{f_{m-1}}^{(n_m)}(T_{n_m}=1)+2^{-2^{j_m-1}}\sum_{\beta}\P_{f_{m,\beta}}^{(n_m)}(T_{n_m}=0)\nonumber\\
&\quad= 1+ \E_{f_{m-1}}^{(n_m)}[(Z-1) 1\{T_{n_m}=0\}]\nonumber\\
&\quad\geq 1-\delta.\label{eq: help6}
\end{align}
We set $f_m$ equal to $f_{m,\beta}$ maximizing the preceding expression in $\beta$.

Then for the limiting sequence $f_{\infty}$ we can likewise compute the likelihood ratio
$$Z'=\frac{d{\Pr}_{f_{\infty}}^{(n_m)}}{d{\Pr}_{f_{m}}^{(n_m)}}.$$
We have that $\E_{f_m}^{(n_m)} [Z']=1$ and
\begin{align}
\|f_{\infty}-f_m\|_2^2&=\sum_{i=m+1}^{\infty}\sum_{k\in Z_{j_i}^1}2^{-(2r'+1)j_i}\leq (1/2)\sum_{i=m+1}^{\infty}2^{-2r'j_{i}}\nonumber\\
&\leq \frac{2^{-2r' j_{m+1}}}{2-2^{1-2r'}}\leq \frac{2^{-(2r'+1) j_{m}}}{2-2^{1-2r'}} ,\label{eq: help1}
\end{align}
following from the definition of $j_m$. Let us denote by $\gamma_{{n_m},j_i}={n_m}2^{-(2r'+1)j_i}$. Then similarly to the computation of $\E_{f_{m-1}}^{(n_m)}[Z^2]$ we have
\begin{align*}
\E_{f_m}^{(n_m)}[Z'^2]&= \E_{f_m}^{(n_m)}\Big( \prod_{i=m+1}^{\infty}\prod_{k\in Z_{j_i}^1} \exp\{\beta_{j_i,k}{\sqrt{n_m\gamma_{n_m,j_i}}y_{k}}-\gamma_{n_m,j_i}/2 \} \Big)^2\\
&= \E_{0}^{(n_m)}\Big( \prod_{i=m+1}^{\infty}\prod_{k\in Z_{j_i}^1} \exp\{2{\sqrt{n_m}}\beta_{j_i,k}\sqrt{\gamma_{n_m,j_i}}{y_{k}}-\gamma_{{n_m},j_i} \} \times\\
&\quad \exp\{{n_m}\sum_{k=1}^{\infty}f_{m,k}{y_{k}}-{n_m}\|f_{m}\|_2^2/2\}\Big)\\
&=\prod_{i=m+1}^{\infty}\prod_{k\in Z_{j_i}^1} \exp\{2\beta_{j_i,k}^2\gamma_{{n_m},j_i}-\gamma_{{n_m},j_i} \}\\
&=\exp\{n_m\|f_{\infty}-f_m\|_2^2\},
\end{align*}
where the right hand side following from $\eqref{def: jm}$ and $\eqref{eq: help1}$ is bounded from above by
\begin{align*}
\exp\{D 2^{-j_m/2}\}\leq 1+\delta^2,
\end{align*}
for some positive constant $D$ (depending only on $r$ and $c$) and $m$ large enough. Hence similarly to $\eqref{eq: help3_0}$ $\E^{(n_m)}_{f_m}[(Z'-1)^2]\leq\delta^2$ which together with $\eqref{eq: help6}$ leads to
\begin{align}
\P_{f_{m-1}}^{(n_m)}(T_{n_m}=1)+\P_{f_{\infty}}^{(n_m)}(T_{n_m}=0)&=\P_{f_{m-1}}^{(n_m)}(T_{n_m}=1)+\E_{f_m}^{(n_m)}[Z'1\{T_{n_m}=0\}]\nonumber\\
&\geq1-\delta+\E_{f_{m}}^{(n_m)}[(Z'-1)1\{T_{n_m}=0\}]\nonumber\\
&\geq 1-2\delta.\label{eq: help0}
\end{align}

Now if $C_{n_m}$ is a confidence set as in the theorem satisfying $\eqref{eq: coverage}$ and $\eqref{eq: size}$ then we have from the definition of the test $T_{n_m}$ that
\begin{align*}
{\P}_{f_{m-1}}^{(n_m)}(T_{n_m}=1)\leq {\P}_{f_{m-1}}^{(n_m)}(f_{m-1}\notin C_{n_m})+{\P}_{f_{m-1}}^{(n_m)}(|C_{n_m}|\geq {r_{n_m}})\leq \a+2\delta,
\end{align*}
which combined with the previous display gives
\begin{align*}
\P_{f_{\infty}}^{(n_m)}(T_{n_m}=0 )\geq 1-\a-4\delta.
\end{align*}

By construction and $\eqref{def: jm}$ we have
\begin{align*}
\|f_\infty-f_{m-1}\|_2^2\geq \sum_{k\in Z_{j_m}^1} \beta_{k}^2 2^{-(2r'+1)j_m }=2^{-2r'j_m}/2
=(c^{\frac{2r'}{2r'+1/2}}/2)n_m^{-2r'/(2r'+1/2)},
\end{align*}
{and $r_{n_m}^2=o(n_m^{-2r'/(2r'+1/2)})$}
hence the event $f_{\infty}\in C_{n_m}$ implies that $C_{n_m}$ contains an element ($f_{\infty}$) that is at least {$ r_{n_m}$} far away from $f_{m-1}$. We deduce the desired contradiction
 \begin{align*}
\P_{f_\infty}^{(n_m)}(f_{\infty}\in C_{n_m})\leq \P_{f_{\infty}}^{(n_m)}(T_{n_m}=1)\leq \a+4\delta=1-\a-\delta.
\end{align*}

\section{Proof of Theorem~\ref{Thm: minimax}}\label{sec: minimax}
The proof is a standard minimax lower bound after checking that the least favourable `prior' concentrates on self-similar functions. Note that $S^{{s}}_{\eps({s})}(b,B,J_0)$ is a subset of $S^{{s}}(B)$ hence it is sufficient to show that the minimax rate over $S^{{s}}_{\eps(r)}(b,B,J_0)$ is bounded below by a small enough constant multiplier of $n^{-{s}/(1+2{s})}$. For notational simplicity we write $\eps=\eps({s})$.

For fixed $0<b<B<\infty$ and given noise level we construct a set of ${s}$-self-similar functions $\{f_m:\,m\in\mathcal{M}\}$ and a benchmark ${s}$-self-similar function $f_0$. First we show that the signals $f_m$ are sufficiently far away from each other with respect to the $\ell^2$-norm (constant times the minimax rate far away). Then we show that their Kullback-Leibler divergence $K(\cdot,\cdot)$ from $f_0$ is small enough to apply Theorem~\ref{thm: help2} {in the Appendix}.

Take ${r}>{s}$ such that ${s}>(1-\eps){r}$ and using the notations of Theorem \ref{Thm: InfLB} let $Z_{i}^0=\{2^{i},2^{i}+1,...,2^{i}+2^{i-1}-1\}$ and $Z_{i}^1=\{2^{i}+2^{i-1},2^{i}+2^{i-1}+1,...,2^{i+1}-1\}$. Then we define $f_0,f_{m,j}\in\ell^2$ as
$$
f_{0,k}=\begin{cases}
K_12^{-({r}+1/2)l} & \text{for $l\in\mathbb{N}$ and $k\in Z_l^0$,}\\
0 & \text{else,}
\end{cases}
$$
and
$$
f_{m,j,k}=\begin{cases}
K_12^{-({r}+1/2)l} & \text{for $l\in\mathbb{N}$ and $k\in Z_l^0$,}\\
\delta \beta_{m,j,k} 2^{-({s}+1/2)j} & \text{for $k\in Z_{j}^1$,}\\
0 & \text{else,}
\end{cases}
$$
for some coefficients $\beta_{m,j,k}\in\{0,1\}$ and $K_1,\delta>0$ to be defined later. Next we show that all the above defined sequences $f_0$ and $f_{m,j}$ are ${s}$-self-similar.

First of all we show that their $\|\cdot\|_{{s},2}$-norm is bounded from below by $b$. From definition we have
$$\|f_{m,j}\|_{{s},2}^2\geq\|f_{0}\|_{{s},2}^2=K_1^2\sum_{l\in\mathbb{N}}\sum_{k\in Z_l^0}2^{-(1+2{r})l}k^{2{s}},$$
where the right hand side {is finite and} depends only on the choice of $s$ and $r$. We choose $K_1$ such that the right hand side of the preceding display is equal to $b^2$.

 As a next step we verify that $f_0$ and $f_{m,j}$ are in $S^{{s}}(B)$
\begin{align*}
\|f_{0}\|_{{s},2}^2&\leq \|f_{m,j}\|_{{s},2}^2=\sum_{k=1}^{\infty}f_{m,j,k}^2k^{2{s}}\\
&\leq {K_1^2\sum_{l\in\mathbb{N}}}\sum_{k\in Z_l^0}2^{-(1+2{r})l}k^{2{s}}+2^{2{s}}\delta^2\sum_{k\in Z_{j}^1}\beta_{m,j,k}^2 2^{-j}\\
&{=} b^2+\delta^2 2^{2{s}-1}.
\end{align*}
It is easy to see that for small enough choice of the parameter $\delta>0$ the right hand side is bounded above by $B^2$ (the choice $\delta^2<(B^2-b^2)2^{1-2{s}}$ is sufficiently good) hence both $f_0$ and $f_{m,j}$ belong to the Sobolev ball $S^{{s}}(B)$. Then we show that $f_0$ satisfies the lower bound $\eqref{def: selfsim}$ as well. Similarly to the proof of Theorem \ref{Thm: InfLB} we have following from ${r}(1-\eps)<{s}$ that
\begin{align*}
\sum_{k=2^{{(1-\eps)J}}}^{2^J}f_{0,k}^2&\geq \sum_{k\in Z_{{\lceil(1-\eps)J\rceil}}^0}f_{0,k}^2= {(K_1^2/2)}2^{-2{r}{\lceil(1-\eps)J\rceil}}\\
&\geq 16\times2^{1+2{s}}B^2 2^{-2{s} J}\geq 16\times2^{1+2{s}}\|f_0\|_{{s},2}^2 2^{-2{s} J},\label{eq: selfsimf_m}
\end{align*}
for $J>J_0$ (where the parameter $J_0$ depends only on $r,s,B$ and $\eps$). The ${s}$-self-similarity of the functions $f_{m,j}$ follows exactly the same way.

Next we define the sequences $f_m$ ($m\in\mathcal{M}_j$) with the help of the sequences $f_{m,j}$, such that the $\ell^2$-distance between them is sufficiently large. It is easy to see that
\begin{align*}
\|f_{m,j,k}-f_{m',j,k}\|_2^2=2^{-j(2{s}+1)}\delta^2\sum_{k\in Z_{j}^1}(\beta_{m,j,k}-\beta_{m',j,k})^2.
\end{align*}
Then following from the Varshamov-Gilbert bound (\cite{TSY}) there exist a subset $\mathcal{M}_j\subset\{0,1\}^{|Z_j^1|}$ with cardinality $M_j=2^{2^{j}/16}$ such that
\begin{align*}
\sum_{k\in Z_{{j}}^1}(\beta_{m,j,k}-\beta_{m',j,k})^2\geq 2^{j}/16,
\end{align*}
for any $m\neq m'{\in\mathcal{M}_j}$. Therefore
\begin{align*}
\|f_{m,j}-f_{m',j}\|_2^2\geq (\delta^2/16) 2^{-2j{s}},
\end{align*}
for $m\neq m'\in\mathcal{M}_j$.
Then choosing $j=j_n$ such that {$j_n=\lfloor\log_2 n/(1+2s)\rfloor$} the $f_m\equiv f_{m,j_n}$ sequences are $2\times (\delta^2/2^{5}){n^{-2s/(1+2s)}}$ separated and are satisfying the self-similarity condition.

The KL-divergence is bounded by
\begin{align*}
K({\Pr}_{f_0},{\Pr}_{f_{m}})&=\frac{n}{2}\|f_{m}-f_0\|_2^2=\frac{n}{2}2^{-j_n(2{s}+1)}\delta^2\sum_{k\in Z_{j_n}^{1}}\beta_{m,j_n,k}^2\\
&\leq\frac{{2^{2s+1}}2^{j_n}\delta^2}{4}\leq \frac{{2^{2s+3}}\delta^2}{\ln 2} \ln M_{{j_n}}.
\end{align*}
Therefore we can conclude the proof by applying Theorem \ref{thm: help2} with $0<\delta<\sqrt{{{2^{-2s-4}}\ln 2}}$ (since in this case $\alpha={({2^{2s+3}}/\ln 2)}\delta^2<1/2$, hence the constant on the right hand side of $\eqref{eq: helpApp1}$ is positive) {and $r_n=(\delta^2/2^5)n^{-2{s}/(1+2{s})}$}.

\appendix
\section{}
We collect here some basic background material used in the proofs, most of which can be found or proved as in \cite{TSY} or \cite{GineNickl_book}.

\begin{theorem}\label{thm: AdaptiveEst}
Consider the Gaussian sequence model $\eqref{model}$ and assume that the true sequence $f\in\ell^2$ belongs to a collection of Sobolev balls $\cup_{s\in[s_{\min},s_{\max}]}S^{s}(B)$ for some fixed $0<s_{\min}<s_{\max}<\infty$ and (unknown) $B>0$. Then there exists a rate adaptive estimator $\hat{f}_n\in\ell^2$ over $\cup_{s\in[s_{\min},s_{\max}]}S^{s}(B)$, i.e., for every $s\in[s_{\min},s_{\max}]$, $B>0$
\begin{align*}
\sup_{f\in S^{s}(B)} \E_{f}\|\hat{f}_n-f\|_2\leq K(s) B^{1/(2s+1)}n^{-s/(2s+1)},
\end{align*}
where $0<K(s)<\infty$ is a fixed constant. We can moreover take $\hat f_n$ such that $\|\hat f_n\|_{s,2} = O_{{\Pr_f}}(1)$ uniformly in $f \in S^s(B)$.
\end{theorem}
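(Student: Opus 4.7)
The plan is to let $\hat f_n\equiv\hat f_n(\hat j_n)$, where $\hat f_n(j)$ is the coordinate projection from \eqref{eq: linEst} and $\hat j_n$ is Lepski's selector from \eqref{def: barj}, both rewritten for the full sample of variance $1/n$ (with harmless adjustments of the numeric constants in Lemma~\ref{lem: EstRes}). The construction uses only the upper bound $s_{\max}$ through the grid $\mathcal J$, hence is genuinely adaptive in $(s,B)$.

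\smallskip

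For the rate bound, fix $f\in S^s(B)$, let $j_n^*=j_n^*(f)$ be the oracle level \eqref{def: j*}, and split $\E_f\|\hat f_n-f\|_2$ along $\{\hat j_n<j_n^*\}$ and its complement. On the good event the triangle inequality together with the defining property \eqref{def: barj} of $\hat j_n$ gives
\[
\|\hat f_n-f\|_2\le \|\hat f_n(\hat j_n)-\hat f_n(j_n^*)\|_2+\|\hat f_n(j_n^*)-f\|_2\le 2\sqrt{2^{j_n^*+1}/n}+\|\hat f_n(j_n^*)-f\|_2,
\]
whose last summand has squared bias at most $B^2 2^{-2j_n^*s}$ and variance $2^{j_n^*+1}/n$; the balance \eqref{eq: help03}--\eqref{eq: help02} renders both of order $B^{2/(2s+1)}n^{-2s/(2s+1)}$. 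On the complement, one has the crude universal bound $\E_f\|\hat f_n-f\|_2^2\le B^2+1$, so Cauchy--Schwarz combined with the super-polynomial tail $\Pr_f(\hat j_n\ge j_n^*)\lesssim\exp(-2^{j_n^*}/8)\le\exp(-n^{\underline\sigma}/8)$ from Lemma~\ref{lem: EstRes}(a) renders this contribution negligible compared with $n^{-s/(2s+1)}$. The resulting constant $K(s)$ depends only on $s$.

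\smallskip

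For the Sobolev-norm bound, again on $\{\hat j_n\le j_n^*\}$ we use
\[
\|\hat f_n\|_{s,2}^2=\sum_{k\le 2^{\hat j_n}}y_k^2 k^{2s}\le 2\|f\|_{s,2}^2+\frac{2}{n}\sum_{k\le 2^{j_n^*}}g_k^2 k^{2s},
\]
whose second term has expectation at most $2(2s+1)^{-1}2^{j_n^*(2s+1)}/n\le c(s)B^2$ by \eqref{eq: help02}; Markov's inequality then delivers $\|\hat f_n\|_{s,2}=O_{\Pr_f}(1)$ uniformly on $S^s(B)$. The residual event is absorbed by Lemma~\ref{lem: EstRes}(a), whose tail probability tends to zero uniformly in $f$.

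\smallskip

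The main technical point is to secure \emph{uniformity} of the constants in both bounds over the whole class $S^s(B)$ while recovering the sharp $B^{1/(2s+1)}$ dependence. This reduces to noting that the oracle level $j_n^*(f)$ depends on $f$ only through $\|f\|_{s,2}\le B$ -- explicitly through formula \eqref{eq: jn*} -- and that every probabilistic tail invoked, namely Lemma~\ref{lem: EstRes}(a) and a standard weighted chi-square deviation, is itself independent of $f$ once $j_n^*$ has been fixed in terms of $\|f\|_{s,2}$.
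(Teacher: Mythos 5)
The paper does not actually provide a proof of this theorem; it is collected in the appendix with a blanket reference to Tsybakov and Gin\'e--Nickl as ``basic background material''. So there is no paper proof to compare against line by line, and what matters is whether your sketch is internally sound. It is: you run the canonical Lepski argument, reusing the selector $\hat j_n$ of \eqref{def: barj} and Lemma~\ref{lem: EstRes}(a) from the body of the paper. On $\{\hat j_n \le j_n^*\}$ the defining inequality of $\hat j_n$ gives $\|\hat f_n(\hat j_n)-\hat f_n(j_n^*)\|_2 \le 2\sqrt{2^{j_n^*+1}/n}$, the projection estimator at level $j_n^*$ has bias and variance both of order $2^{j_n^*}/n$, and \eqref{eq: jn*} turns $2^{j_n^*}/n$ into $B^{2/(2s+1)}n^{-2s/(2s+1)}$ up to a constant $K(s)$ controlled by $c_n\in[-1/2,s_{\max}]$. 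The complement $\{\hat j_n > j_n^*\}$ is killed by Cauchy--Schwarz against the universal second-moment bound $\E_f\|\hat f_n-f\|_2^2\le 1+\|f\|_2^2$ and the super-polynomial tail $\exp(-2^{j_n^*}/8)\le\exp(-n^{\underline\sigma}/8)$. The Sobolev-norm bound follows the same two-event split, with the key step of upper bounding the random truncation level by $j_n^*$ on the good event and invoking \eqref{eq: help02} to bound $(1/n)\sum_{k\le 2^{j_n^*}}k^{2s}$ by a constant multiple of $B^2$. This all checks out, and the constants depend only on $s$ and the fixed $s_{\max}$.

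Two small remarks. First, Lemma~\ref{lem: EstRes}(a) as stated requires only $f\in S^s(B)$, not self-similarity, so its reuse here is legitimate; you were right to note that only the numeric constants change when passing from the variance-$2$ subsample to the full sample. Second, your bad-event bound $\sqrt{B^2+1}\exp(-cn^{\underline\sigma})$ only falls below $K(s)B^{1/(2s+1)}n^{-s/(2s+1)}$ for $n\ge n_0(s,B)$, i.e.\ the threshold depends on $B$. That is consistent with the way the theorem is invoked (the claims in Theorem~\ref{thm: Conf} are asymptotic for fixed $B$), but a blockwise Stein or penalized-projection construction, which is what the textbook references the paper cites actually use, gives a non-asymptotic oracle inequality and exact-constant adaptation without this $B$-dependent threshold. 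That is the one respect in which your route, while correct and arguably simpler given the machinery already in the paper, is slightly weaker than the canonical one.
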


\begin{theorem}\label{thm: help2}
Suppose $\mathcal{F}$ contains $\{f_m : m = 0, 1,...M\},~M>1,$  that are $2r_n$ separated
($d(f_m, f_{m'} ) \geq 2r_n,~ \forall m \neq m'$), and such that the $\Pr_{f_m}$ are all absolutely continuous with respect to $\Pr_{f_0}$ . Set $\bar{M} =
\max\{e,M\}$ and assume that for some $\alpha>0$
$$\frac{1}{M}\sum_{m=1}^{M}K({\Pr}_{f_m},{\Pr}_{f_0})\leq\alpha\log \bar{M}.$$
Then the minimax risk from is lower bounded by
$$\inf_{\tilde{f}_n}\sup_{f\in\mathcal{F}}\E_f d(\tilde{f}_n,f)\geq r_n\frac{\sqrt{\bar{M}}}{1+\sqrt{\bar{M}}}\Big(1-2\alpha-\sqrt{\frac{2\a}{\log\bar{M}}}\Big).$$
\end{theorem}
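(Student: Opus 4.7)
This is the classical Birg\'e--Tsybakov reduction from estimation to testing (Theorem~2.7 in \cite{TSY}); the plan is to carry out the standard two-step argument, first converting an estimator into a multiple hypothesis test and then applying a sharp Fano-type inequality to lower-bound the minimax testing error.

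\emph{Step 1 (reduction to testing).} Given an estimator $\tilde f_n$, let $\psi^{*}\in\{0,1,\dots,M\}$ be an index minimising $d(\tilde f_n,f_m)$. The $2r_n$-separation of $\{f_0,\ldots,f_M\}$ together with the triangle inequality imply $\{d(\tilde f_n,f_m)<r_n\}\subset\{\psi^{*}=m\}$ for every $m$, so by Markov's inequality
$$\E_{f_m} d(\tilde f_n,f_m)\ge r_n\,\Pr\nolimits_{f_m}\!\left(d(\tilde f_n,f_m)\ge r_n\right)\ge r_n\,\Pr\nolimits_{f_m}(\psi^{*}\ne m).$$
Taking the maximum in $m$ yields
$$\sup_{f\in\mathcal F}\E_f\, d(\tilde f_n,f)\ge r_n\,p_{e,M},\qquad p_{e,M}:=\inf_{\psi}\max_{0\le m\le M}\Pr\nolimits_{f_m}(\psi\ne m).$$

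\emph{Step 2 (Fano-type lower bound on $p_{e,M}$).} The theorem then reduces to establishing
$$p_{e,M}\ge\frac{\sqrt{\bar M}}{1+\sqrt{\bar M}}\Big(1-2\alpha-\sqrt{\tfrac{2\alpha}{\log\bar M}}\Big).$$
I would place the uniform prior on $\{f_0,\ldots,f_M\}$; by convexity of KL divergence the mutual information between the hypothesis index and the observation is bounded by $(M+1)^{-1}\sum_m K(\Pr_{f_m},\Pr_{f_0})\le\alpha\log\bar M$ under the hypothesis of the theorem. Fano's inequality, in its sharp form due to Birg\'e, then produces the displayed bound; combining with Step~1 completes the proof.

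\emph{Main difficulty.} Step~1 is purely formal, requiring only the triangle and Markov inequalities. The only substantive ingredient is the sharp version of Fano's inequality with the $\sqrt{\bar M}/(1+\sqrt{\bar M})$ prefactor, which comes from an optimised threshold in the associated Bayes test. Since this is a standard textbook result (Theorem~2.5 in \cite{TSY}) that is independent of the specifics of the present paper, I would cite it rather than reprove it.
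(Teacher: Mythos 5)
Your proposal is correct and, in spirit, identical to what the paper does: the paper does not actually prove Theorem~\ref{thm: help2} but lists it in the Appendix under ``basic background material used in the proofs, most of which can be found or proved as in \cite{TSY} or \cite{GineNickl_book}'', i.e.\ it simply defers to Tsybakov's book in exactly the way you do. Your Step~1 reduction (minimum-distance test $\psi^*$, inclusion $\{d(\tilde f_n,f_m)<r_n\}\subset\{\psi^*=m\}$ via the $2r_n$-separation and triangle inequality, then Markov) and your Step~2 invocation of the sharp Fano bound with the $\sqrt{\bar M}/(1+\sqrt{\bar M})$ prefactor are precisely the two ingredients of Tsybakov's general reduction scheme; the only cosmetic difference is that the statement here uses $\bar M=\max\{e,M\}$, a minor variant to keep $\log\bar M\ge 1$, which is the form given in \cite{GineNickl_book} and changes nothing in the argument.
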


\begin{proposition}\label{prop: help1}
For the Gaussian vector $(y_k : k \in \mathbb{Z})$ from (\ref{model}) denote by ${\Pr}_{f}^{(n)}$ the product corresponding law on the cylindrical $\sigma$-algebra $\mathcal{C}$ of $\mathbb{R}^{\mathbb{Z}}$. If $(f_k:\, k \in \mathbb{Z})\in\ell^2$ then ${\Pr}_{f}^{(n)}$
is absolutely continuous with
respect to ${\Pr}_{0}^{(n)}$, and the likelihood ratio, for ${\Pr}_{0}^{(n)}$ is given by
\begin{equation}
\frac{d{\Pr}_{f}^{(n)}}{d {\Pr}_{0}^{(n)}}=\exp\Big\{n\sum_{k\in\mathbb{Z}}f_ky_k-\frac{n}{2}\|f\|_2^2\Big\}.\label{eq: helpApp1}
\end{equation}
\end{proposition}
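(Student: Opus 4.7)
The plan is to reduce the claim to the finite-dimensional Gaussian case and then pass to the infinite-dimensional limit via a martingale convergence argument. Throughout, let $\mathcal F_N = \sigma(y_1,\dots,y_N)$ and write $L_N$ for the candidate density restricted to level $N$.

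First, I would compute the finite-dimensional likelihood ratio directly. Fix $N$; under $\Pr_0^{(n)}$ the vector $(y_1,\dots,y_N)$ is $N(0,(1/n)I_N)$ and under $\Pr_f^{(n)}$ it is $N((f_1,\dots,f_N),(1/n)I_N)$. Expanding the ratio of Gaussian densities and collecting terms gives
\[
L_N \;:=\; \frac{d\Pr_f^{(n)}|_{\mathcal F_N}}{d\Pr_0^{(n)}|_{\mathcal F_N}} \;=\; \exp\Big\{n\sum_{k=1}^N f_k y_k - \frac{n}{2}\sum_{k=1}^N f_k^2\Big\}.
\]

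Second, I would observe that $(L_N)_{N\ge 1}$ is a nonnegative martingale under $\Pr_0^{(n)}$ with respect to $(\mathcal F_N)$: using independence of the $y_k$ and the Gaussian moment generating function, $\E_0^{(n)}[\exp(nf_N y_N - n f_N^2/2)\mid\mathcal F_{N-1}]=1$, so $\E_0^{(n)}[L_N\mid\mathcal F_{N-1}]=L_{N-1}$. Doob's martingale convergence theorem therefore yields $L_N\to L_\infty$ $\Pr_0^{(n)}$-a.s.

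Third, I would identify $L_\infty$ explicitly. Under $\Pr_0^{(n)}$ the partial sums $\sum_{k=1}^N f_k y_k$ form an $L^2$-bounded martingale with $\mathrm{Var}_0^{(n)}\big(\sum_{k=1}^N f_k y_k\big)=\|K_N f\|_2^2/n\le \|f\|_2^2/n<\infty$, so they converge a.s.\ (and in $L^2$) to a centred Gaussian variable with variance $\|f\|_2^2/n$, which we denote $\sum_{k}f_k y_k$. Combined with $\sum_{k=1}^N f_k^2\to\|f\|_2^2$, this gives
\[
L_\infty \;=\; \exp\Big\{n\sum_{k}f_k y_k - \frac{n}{2}\|f\|_2^2\Big\}\quad\Pr_0^{(n)}\text{-a.s.},
\]
and a direct Gaussian MGF computation shows $\E_0^{(n)}[L_\infty]=1$. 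Hence the measure $\mathrm Q$ on $(\mathbb R^{\mathbb Z},\mathcal C)$ defined by $d\mathrm Q = L_\infty\,d\Pr_0^{(n)}$ is a probability measure absolutely continuous with respect to $\Pr_0^{(n)}$.

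Finally, I would verify $\mathrm Q=\Pr_f^{(n)}$ on $\mathcal C$. By the tower property and the martingale identity, the restriction of $\mathrm Q$ to $\mathcal F_N$ has density $\E_0^{(n)}[L_\infty\mid\mathcal F_N]=L_N$ with respect to $\Pr_0^{(n)}|_{\mathcal F_N}$ (uniform integrability of $(L_M)_{M\ge N}$ on $\mathcal F_N$ holds because $\E_0^{(n)}L_M=1$ and the exponents converge in $L^2$). Since step one shows this density agrees with $d\Pr_f^{(n)}|_{\mathcal F_N}/d\Pr_0^{(n)}|_{\mathcal F_N}$, the measures $\mathrm Q$ and $\Pr_f^{(n)}$ coincide on each $\mathcal F_N$ and therefore on $\mathcal C=\sigma(\bigcup_N\mathcal F_N)$ by a $\pi$-$\lambda$ argument, yielding the claimed formula.

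There is no real obstacle here; the only step that requires a little care is the identification $L_\infty$ together with the matching of finite-dimensional marginals, which rests on nothing deeper than $L^2$-convergence of the series $\sum f_k y_k$ (guaranteed by $f\in\ell^2$) and uniform integrability of the martingale on each $\mathcal F_N$. Everything else is standard Gaussian bookkeeping.
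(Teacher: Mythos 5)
Your argument is correct and is exactly the standard Kakutani-style martingale argument for absolute continuity of Gaussian product measures; the paper itself does not prove Proposition~\ref{prop: help1} but simply defers it to the textbook references (\cite{TSY}, \cite{GineNickl_book}), where the same route is taken. One small point worth tightening: the parenthetical justification of uniform integrability of $(L_M)$ is stated a bit loosely -- the facts that $\E_0^{(n)} L_M = 1$ and that the exponents converge in $L^2$ do not by themselves yield UI. Two clean fixes are available and you have all the ingredients for either: (i) note that the exponent $n S_M - \tfrac{n}{2}\sum_{k\le M}f_k^2$ is Gaussian with mean $-\tfrac{n}{2}\sum_{k\le M}f_k^2$ and variance $n\sum_{k\le M}f_k^2$, so $\E_0^{(n)} L_M^2 = \exp\{n\sum_{k\le M}f_k^2\}\le \exp\{n\|f\|_2^2\}$ is uniformly bounded, giving UI by $L^2$-boundedness; or (ii) since you have shown $L_M\to L_\infty$ almost surely and that $\E_0^{(n)}L_\infty = 1 = \E_0^{(n)}L_M$, Scheff\'e's lemma gives $L^1$-convergence directly, which is all that is needed to conclude $\E_0^{(n)}[L_\infty\mid\mathcal F_N]=L_N$. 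With either patch the remainder of your argument -- identification of the limit and the $\pi$-$\lambda$ step to pass from $\mathcal F_N$ to $\mathcal C$ -- is complete and correct.
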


\begin{theorem}\label{thm: help1}
Let $g_i,\, i = 1,...,n,$ be i.i.d. $N(0,{\sigma^2})$ and set $X =\sum_{i=1}^n(g_i^2-{\sigma^2}).$ Then for any $t\geq0$,
$${\Pr}_f(X >t)\leq {\exp\Big\{-\frac{t^2/\sigma^4}{4(n+t/\sigma^2)}\Big\}},$$
and the same inequality holds for $-X$.
\end{theorem}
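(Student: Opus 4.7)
\textbf{Proof proposal for Theorem~\ref{thm: help1}.} The plan is the standard Chernoff/Bernstein approach for (centered) chi-squared variables. First I would write, for any $\lambda \in (0, 1/(2\sigma^2))$,
\[
{\Pr}(X > t) \leq e^{-\lambda t}\,\E\bigl[e^{\lambda X}\bigr]
= e^{-\lambda t} \prod_{i=1}^{n} \E\bigl[e^{\lambda(g_i^{2}-\sigma^{2})}\bigr],
\]
using independence. For a single $g \sim N(0,\sigma^{2})$ and $\lambda \sigma^{2} < 1/2$ a direct Gaussian integral gives
\[
\E\bigl[e^{\lambda(g^{2}-\sigma^{2})}\bigr] \;=\; \frac{e^{-\lambda \sigma^{2}}}{\sqrt{1-2\lambda \sigma^{2}}}.
\]

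Second, I would control the log-MGF by the series expansion
\[
\log \E\bigl[e^{\lambda(g^{2}-\sigma^{2})}\bigr]
= -\tfrac{1}{2}\log(1-2\lambda\sigma^{2}) - \lambda \sigma^{2}
= \tfrac{1}{2}\sum_{k \geq 2}\frac{(2\lambda\sigma^{2})^{k}}{k}
\;\leq\; \frac{\lambda^{2}\sigma^{4}}{1-2\lambda\sigma^{2}},
\]
where the last inequality bounds $1/k \leq 1/2$ in the sum and resums the geometric series. Taking the $n$-fold product one obtains
\[
{\Pr}(X > t) \;\leq\; \exp\!\left(-\lambda t + \frac{n\lambda^{2}\sigma^{4}}{1-2\lambda\sigma^{2}}\right).
\]

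Third, I would optimize. Setting $A := n\sigma^{2} + t$ and making the choice
\[
\lambda \;=\; \frac{t}{2\sigma^{2}A} \;=\; \frac{t}{2\sigma^{2}(n\sigma^{2}+t)},
\]
one has $1 - 2\lambda\sigma^{2} = n\sigma^{2}/A$, so a short computation yields
\[
-\lambda t + \frac{n\lambda^{2}\sigma^{4}}{1-2\lambda\sigma^{2}}
= -\frac{t^{2}}{2\sigma^{2}A} + \frac{t^{2}}{4\sigma^{2}A}
= -\frac{t^{2}}{4\sigma^{2}(n\sigma^{2}+t)}
= -\frac{t^{2}/\sigma^{4}}{4(n + t/\sigma^{2})},
\]
which is exactly the claimed bound. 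The chosen $\lambda$ lies in the admissible range $(0, 1/(2\sigma^{2}))$ since $t > 0$, so every step above is legitimate.

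Finally, for the analogous bound on $-X$ I would repeat the argument with $\lambda > 0$ and the MGF of $-(g^{2}-\sigma^{2})$, which equals $e^{\lambda\sigma^{2}}/\sqrt{1+2\lambda\sigma^{2}}$; this MGF is finite for \emph{all} $\lambda > 0$, and the analogous elementary inequality $-\tfrac{1}{2}\log(1+u) + u/2 \leq u^{2}/4 \leq u^{2}/(4(1+u)) \cdot (1+u) \leq \lambda^{2}\sigma^{4}/(1-2\lambda\sigma^{2})$ for the relevant range gives the same tail bound after the same optimization in $\lambda$. The only mildly delicate step is the sub-gamma-type log-MGF estimate in step two; once that is in hand, the choice of $\lambda$ that yields precisely the stated denominator $n + t/\sigma^{2}$ is essentially forced and the rest is algebra.
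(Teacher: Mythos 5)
Your proof is correct. The paper does not include a proof of Theorem~\ref{thm: help1} --- it is stated in the appendix as standard background, with a pointer to \cite{TSY} and \cite{GineNickl_book} --- and the Chernoff/Bernstein argument you give (exact MGF of the centered $\chi^2$, sub-gamma log-MGF bound $\log\E e^{\lambda(g^2-\sigma^2)} \le \lambda^2\sigma^4/(1-2\lambda\sigma^2)$, then the forced choice $\lambda = t/(2\sigma^2(n\sigma^2+t))$) is precisely the standard derivation of this Laurent--Massart-type tail bound found in those references, so there is nothing genuinely different to compare.

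One small remark on the $-X$ direction: the chain of inequalities you wrote for the one-sided log-MGF is a bit garbled as displayed (the middle step $u^2/4 \le u^2/(4(1+u))\cdot(1+u)$ is a trivial identity and doesn't need to appear), but the content is right. In fact the left tail is easier than you make it look: since $h(u) := u/2 - \tfrac12\log(1+u)$ satisfies $h(0)=0$ and $h'(u) = u^2/(2(1+u)) \ge 0$ with $h(u)\le u^2/4$ for all $u\ge 0$, the MGF of $\sigma^2-g^2$ is finite for all $\lambda>0$ and one can simply use $\log\E e^{\lambda(\sigma^2-g^2)} \le \lambda^2\sigma^4$, optimize $\lambda = t/(2n\sigma^4)$ to get $\Pr(-X>t)\le\exp(-t^2/(4n\sigma^4))$, and note that this is \emph{stronger} than the claimed bound since $n \le n + t/\sigma^2$; no need to restrict $\lambda$ or reintroduce the $(1-2\lambda\sigma^2)$ denominator. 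But your route also works, and matches the symmetric statement of the theorem.
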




\bibliographystyle{acm} 
\bibliography{references}





\end{document}